\numberwithin{equation}{section}
\newtheorem{Definition}{Definition}[section]
\newtheorem{theorem}[Definition]{Theorem}
\newtheorem{lemma}[Definition]{Lemma}
\newtheorem{proposition}[Definition]{Proposition}
\newtheorem{corollary}[Definition]{Corollary}
\newtheorem{result}[Definition]{Result}
\begin{document}
\title{\Large \bf On some characterizations of  strong power graphs of finite groups}
\author{A. K. Bhuniya and Sudip Bera}
\date{}
\maketitle

\begin{center}
Department of Mathematics, Visva-Bharati, Santiniketan-731235, India. \\
anjankbhuniya@gmail.com, sudipbera517@gmail.com
\end{center}

\begin{abstract}
Let $ G $ be a finite group  of order $ n$. The strong power graph $\mathcal{P}_s(G) $ of $G$ is the
undirected graph whose vertices are the elements of $G$ such that two distinct vertices $a$ and $b$
are adjacent if  $a^{{m}_1}$=$b^{{m}_2}$ for some positive integers ${m}_1 ,{m}_2 < n$. In this article we classify all groups $G$ for which $\mathcal{P}_s(G)$ is line graph and Caley graph. Spectrum and permanent of the Laplacian matrix of the strong power graph $\mathcal{P}_s(G)$ are found for any finite group $G$.
\end{abstract}
\textbf{Keywords:} groups, strong power graphs, Cayley graphs, Laplacian spectrum, Laplacian permanent.
\\ \textbf{2010 Mathematics Subject Classification:} 05C25, 05C50.

\section{Introduction}
The study of different algebraic structures using graph theory becomes an exciting research topic in the last few decades, leading to many fascinating results and questions. Given an algebraic structure $S$, there are different formulations to associate a directed or undirected graph to $S$, and we can study the algebraic properties of $S$ in terms of properties of associated graphs.

Directed power graphs associated to semigroups were introduced by Kelarev and Quinn \cite{K}. If $S$ is a semigroup, then the directed power graph $\overrightarrow{\mathcal{P}(S)}$ of $S$ is a directed graph with $S$ as the set of all vertices and for any two distinct vertices $u$ and $v$ of $S$, there is an arc from $u$ to $v$ if $v=u^m$ for some positive integer $m$. Then Chakrabarty, Ghosh and Sen \cite{pet} defined the undirected power graph $\mathcal{P}(S)$ of a semigroup $S$ as the underlying undirected graph of $\overrightarrow{\mathcal{P}(S)}$. Thus two distinct elements $u$ and $v$ of $S$ are edge connected in $\mathcal{P}(S)$ if $u=v^{m}$ or $v=u^{m}$ for some positive integer $m$. They proved that for a finite group $G$, the undirected power graph $\mathcal{P}(G)$ is complete if and only if $G$ is a cyclic group of order $1$ or $p^{m}$ for some prime $p$ and positive integer $m$. In \cite{Cameran}, Cameron showed that for two finite commutative groups $G_1$ and $G_2$, $\mathcal{P}(G_1) \cong \mathcal{P}(G_2)$ implies that $G_1 \cong G_2$. They also observed that two finite groups with isomorphic undirected power graphs have the same number of elements of each order.

Singh and Manilal \cite{cb} introduced strong power graph as a generalization of the undirected power graph of a finite group. Let $G$ be a group of $n$ elements. The strong power graph $\mathcal{P}_s(G)$ of $G$ is a simple undirected graph  whose vertices consists of the elements of $G$ and two distinct vertices $a$ and $b$ are adjacent in $\mathcal{P}_s(G)$ if $a^{m_1}$=$b^{m_2}$ for some positive integers $m_1, m_2 < n$. Thus a finite group $G$ is noncyclic if and only if $\mathcal{P}_s(G)$ of is complete. Also $\mathcal{P}_s(G)$ is connected if and only if $n$ is composite.

Here we give several graph theoretic and spectral characterizations of the strong power graph of a finite group. The strong power graph $\mathcal{P}_s(G)$ is a Cayley graph of some group if and only if $G$ is noncyclic. We give a complete list of the finite groups $G$ such that $\mathcal{P}_s(G)$ is a line graph. Such characterizations of the strong power graphs are given in Section 2. In Section 3, the algebraic connectivity and the chromatic number of $\mathcal{P}_s(G)$ have been found.

For any graph $\Gamma$, let $A(\Gamma)$ be the adjacency  matrix and $D(\Gamma)$ be the diagonal of vertex degrees. Then the Laplacian matrix of $\Gamma$ is defined as $L(\Gamma)$=$D(\Gamma)-A(\Gamma)$. Clearly $L(\Gamma)$ is a real symmetric matrix and it is well known that $ L(\Gamma)$ is a positive semidefinite matrix with $0$ as the smallest eigen value. Thus we can assume that the Laplacian eigen values are $\lambda_1 \geq \lambda_2 \geq \lambda_3 \geq \cdots \geq \lambda_n = 0$. Among all Laplacian eigen values of a graph, one of the most popular is the second smallest, called by Fiedler \cite{f}, the algebraic connectivity of a graph. It is a good parameter to measure, how well a graph  is connected. For example, a graph is connected if and only if its algebraic connectivity is non-zero. According to Mohar \cite{m} the Laplacian eigen values are more intuitive and much more important than the eigen values of the adjacency matrix. We give a complete characterization of the Laplacian spectrum of strong power graph of any finite group in Section 4.

In Section 5, we have derived an explicit formula for the permanent of the Laplacian matrix of strong power graphs of any finite group. Let $A=(a_{ij})$ be a square matrix of order $n$, then the permanent of $A$ is denoted by per$(A)=\sum_{\sigma \in S_n}a_{1\sigma(1)}a_{2\sigma(2)}\cdots a_{n\sigma(n)}$, where $S_n$ is the set of all permutations of $1, 2, \cdots, n $. The permanent function was introduced by Binet and independently by Cauchy in 1812. We refer to \cite{H} and \cite{MH} for more on permanent.

\section{Representing strong power graph as line graph and Cayley graph}
In this section we characterize the groups $G$ such that the strong power graph $\mathcal{P}_s(G)$ is a line graph and Cayley graph. The line graph $L(\Gamma)$ of a graph $\Gamma$ is constructed by taking the edge of $\Gamma$ as vertices of $L(\Gamma)$, and joining two vertices in $L(\Gamma)$ whenever the corresponding edges in $\Gamma$ have a common vertex. A graph $\Gamma$ is called a line graph if it is a line graph of some graph. The induced subgraph $T$ of a graph $\Gamma$ is a subgraph whose vertex set $V(T) \subseteq V(\Gamma)$ and all the edges whose endpoints are contained in $V(T)$. The graph obtained by taking the union of graphs $\Gamma$ and $\Pi$ with disjoint vertex set is the disjoined union or sum, denoted by $\Gamma +\Pi$. The following result is a fundamental characterization of the simple line graphs. Proof of this result can be found in \cite{w}.
\begin{lemma}
A simple graph $G$ is the line graph of some simple graph if and only if $G$ does not have any of the nine graphs below as an induced subgraph.
\end{lemma}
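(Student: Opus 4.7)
The statement is Beineke's classical characterisation of line graphs, and the authors cite it rather than prove it, but I sketch how a proof would proceed. The argument naturally splits into two directions: showing that no line graph can contain any of the nine listed graphs as an induced subgraph, and conversely showing that any graph avoiding all nine is itself a line graph.

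For the necessity direction I would take each of the nine forbidden graphs $F$ in turn and suppose, for contradiction, that $F$ is an induced subgraph of $L(H)$ for some simple graph $H$. Each vertex of $F$ lifts to an edge of $H$, and adjacency in $F$ translates to a shared endpoint in $H$. A short case analysis on which pairs of edges share which endpoint forces an impossible configuration. For example, the claw $K_{1,3}$ lifts to three edges of $H$ pairwise meeting at a common vertex but no two of them sharing a further common endpoint, which is already contradictory; the remaining eight graphs are handled by similar but slightly longer case analyses on the multiplicities of shared endpoints.

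For sufficiency I would invoke Krausz's theorem: a graph $G$ is a line graph if and only if $E(G)$ admits a partition into cliques such that every vertex of $G$ lies in at most two of them. The plan is to construct such a Krausz partition under the forbidden-subgraph hypothesis. The key local input is that the absence of $K_{1,3}$ forces the neighbourhood of each vertex to split into at most two cliques; the remaining forbidden graphs encode exactly the local obstructions to combining these splittings into one globally consistent partition.

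The main obstacle is this globalisation step. Splitting each vertex-neighbourhood locally into at most two cliques is easy once $K_{1,3}$ is excluded; what is hard is to show that these local choices can be made coherently at adjacent vertices so that they assemble into a genuine edge-partition of $G$ into cliques, from which the preimage graph $H$ can be reconstructed (one new vertex of $H$ for each clique in the partition, together with one extra vertex of degree one for each unsaturated vertex of $G$). Each of the more intricate forbidden subgraphs rules out precisely one kind of local conflict between neighbouring choices, and the standard argument proceeds by an inductive reconstruction using these exclusions. Because a fully careful execution is lengthy, it is reasonable that the authors appeal to \cite{w} rather than reproduce the argument here.
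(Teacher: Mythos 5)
The paper itself gives no proof of this lemma: it is Beineke's forbidden-subgraph characterization of line graphs, and the authors simply defer to \cite{w}. Your proposal correctly recognizes this and sketches the standard textbook argument, so there is no conflict with the paper's (nonexistent) proof; your outline --- necessity by lifting each forbidden graph to a configuration of edges in a preimage $H$ and deriving a contradiction, sufficiency via a Krausz-type partition of the edge set into cliques with each vertex in at most two of them, with the eight non-claw forbidden graphs ruling out the local obstructions to globalizing the neighbourhood splittings --- is the accepted structure of the proof. One small imprecision worth fixing: in the claw case, $K_{1,3}$ has four vertices and hence lifts to \emph{four} edges of $H$, a centre edge $e_c$ meeting each of three pairwise disjoint edges $e_a,e_b,e_d$; the contradiction is the pigeonhole observation that two of $e_a,e_b,e_d$ must attach at the same endpoint of $e_c$ and would therefore be adjacent in $L(H)$. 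Your phrase ``three edges pairwise meeting at a common vertex'' describes a different (and non-contradictory) configuration, so as written that sentence does not yet yield the contradiction. Apart from that, the sketch is sound in outline, and deferring the lengthy globalisation step to the cited reference is exactly what the paper does.
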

\begin{center}
    \includegraphics[width=6in]{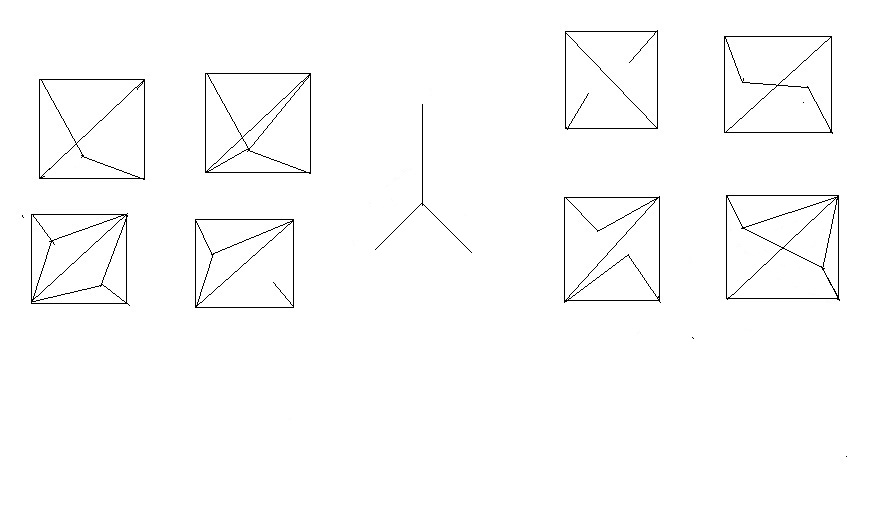}
\end{center}

For any positive integer $n$, $\mathbb{Z}_n$ denotes the group of all integers modulo $n$. Let $D'(n)=\{d \in \mathbb{N} \mid d|n, d \neq 1, n\}$. Thus $D'(n)=\emptyset$ for every prime $p$.  Then we have $\phi(n) > n-4 \Leftrightarrow n-\phi(n) < 4 \Leftrightarrow \sum_{d\in \acute D(n)}\phi(d)<3$  $\Leftrightarrow n=4, 9$ or a prime.
\begin{lemma}
If $\mathcal{P}_s(\mathbb{Z}_n)$ is a line graph then $n=4, 9$ or a prime.
\end{lemma}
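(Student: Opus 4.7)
The plan is to prove the contrapositive: if $n$ is composite and $n \notin \{4,9\}$, then $\mathcal{P}_s(\mathbb{Z}_n)$ is not a line graph. By Lemma~2.1, it suffices to exhibit one of the nine listed forbidden subgraphs as an induced subgraph.

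First I would describe the structure of $\mathcal{P}_s(\mathbb{Z}_n)$ explicitly. Writing $\mathbb{Z}_n$ additively, any two distinct non-zero elements $a, b$ turn out to be adjacent: if $a$ is a generator, then $b = m a$ for some $1 \le m \le n-1$, so one may take $m_1 = m$ and $m_2 = 1$; if both are non-generators, then the orders of $a$ and $b$ are strictly less than $n$ and give $m_1 a = m_2 b = 0$. The element $0$ is adjacent to a non-zero $b$ iff $m_2 b \equiv 0 \pmod{n}$ for some $1 \le m_2 < n$, which is equivalent to $\gcd(b, n) > 1$, i.e., to $b$ being a non-generator. Hence $\mathcal{P}_s(\mathbb{Z}_n)$ is the complete graph $K_n$ with exactly the $\phi(n)$ edges joining $0$ to the generators deleted.

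Second, under the hypothesis the equivalence recorded just before the statement gives $n - \phi(n) \ge 4$, so one may select three distinct non-zero non-generators $a_1, a_2, a_3$, and since $\phi(n) \ge 1$ a generator $g$ is available. By the description above, on the five vertices $\{0, g, a_1, a_2, a_3\}$ every pair is adjacent except $\{0, g\}$, so the induced subgraph is $K_5 - e$.

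It remains to observe that $K_5 - e$ is one of the nine forbidden subgraphs. A direct argument verifies this: if $L(H) = K_5 - e$ then $H$ has five edges, and applying $\deg_{L(H)}(uv) = \deg_H(u) + \deg_H(v) - 2$ to the degree sequence $(4,4,4,3,3)$ forces $\sum_v \deg(v)^2 = 28$; an enumeration of degree sequences with $\sum \deg = 10$ shows no simple graph realises this, so $K_5 - e$ is not a line graph. Since every proper induced subgraph of $K_5 - e$ is $K_4$ or the diamond $K_4 - e$ (both line graphs), $K_5 - e$ is in fact a minimal non-line graph, hence one of the nine. The principal obstacle is the choice of this witness: the claw $K_{1,3}$ cannot occur in $\mathcal{P}_s(\mathbb{Z}_n)$, because any three common neighbours of a vertex either lie entirely in the non-zero clique or comprise $0$ together with non-generators, and so are always pairwise adjacent. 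Thus $K_5 - e$ is the simplest remaining candidate, and its extraction relies exactly on the inequality $n - \phi(n) \ge 4$ furnished by the remark preceding the lemma.
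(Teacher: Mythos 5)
Your proof is correct and takes essentially the same route as the paper: from $n-\phi(n)-1\ge 3$ you pick three non-identity non-generators together with a generator and the identity, and the induced subgraph on these five vertices is $K_5-e$ (the graph the paper exhibits in its figure), one of the nine forbidden induced subgraphs of Lemma 2.1. Your extra verification that $K_5-e$ really is among the nine (not a line graph, but every proper induced subgraph is) is a useful explicit supplement to the paper's appeal to the picture, though the degree-sequence enumeration is stated a little loosely --- sequences such as $(3,3,3,1)$ do satisfy $\sum d_v=10$ and $\sum d_v^2=28$ and must be ruled out by realizability rather than by arithmetic alone; your minimality argument already closes this gap.
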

\begin{proof}
If possible, on the contrary, suppose that $n\neq 4, 9$ and a prime. Then it follows that $\phi(n) \leq n-4$ which implies that $n-\phi(n)-1\geq 3$, and hence $\mathbb{Z}_n$ has at least three non-generators say, $b_1, b_2$  and $b_3$. In this case the following graph is an induced subgraph of $\mathcal{P}_s(\mathbb{Z}_n)$.
\begin{center}
    \includegraphics[width=4in]{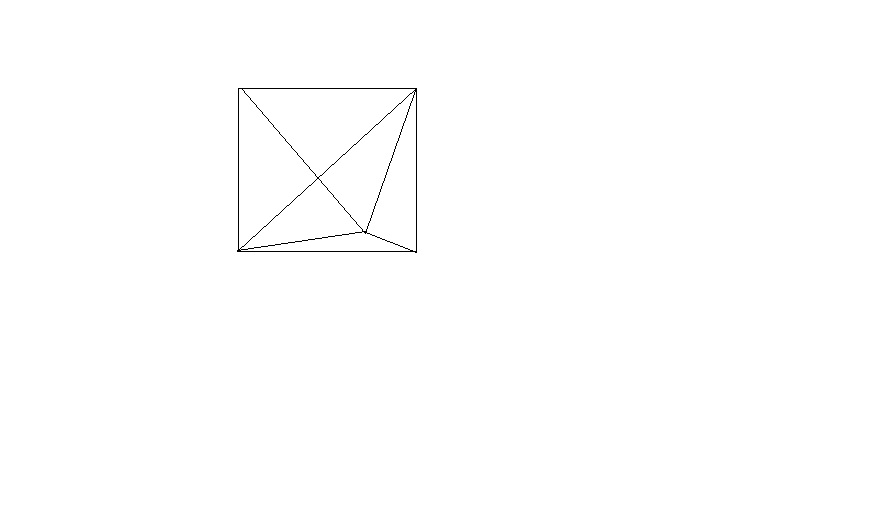}
\end{center}

Therefore, by Lemma 2.1, $\mathcal{P}_s(\mathbb{Z}_n)$ can not be a line graph.
\end{proof}

Now we are ready to characterize all groups $G$ such that $\mathcal{P}_s(G)$ is a line graph.
\begin{theorem}
Let $G$ be a finite group of order $n$.
\begin{enumerate}
\item If $G$ is non-cyclic, then $\mathcal{P}_s(G)$ is a line graph of $K_{1, n}$.
\item If $G$ is cyclic, then $\mathcal{P}_s(G)$ is line graph if and only if $n=4, 9$ or a prime.
\end{enumerate}
\end{theorem}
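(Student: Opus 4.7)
The theorem has two parts, which I would handle separately. For assertion (1), I would invoke the completeness characterization recalled in the introduction: $\mathcal{P}_s(G)=K_n$ exactly when $G$ is non-cyclic. Since every two edges of the star $K_{1,n}$ meet at its centre, $L(K_{1,n})=K_n$, giving $\mathcal{P}_s(G)=L(K_{1,n})$ for non-cyclic $G$ in a single line.

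Assertion (2) is where the real work lies. The forward implication is immediate from Lemma 2.2, so the task reduces to showing that for $n$ in $\{4,9\}$ or $n$ prime, $\mathcal{P}_s(\mathbb{Z}_n)$ really is a line graph. My plan is to treat each of the three cases in turn, first describing $\mathcal{P}_s(\mathbb{Z}_n)$ explicitly from the definition and then exhibiting a graph $H_n$ with $L(H_n)\cong\mathcal{P}_s(\mathbb{Z}_n)$.

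For $n=p$ prime, a short check shows that the identity $0$ is isolated (no non-zero $b$ admits $m_2 b\equiv 0\pmod p$ with $m_2<p$, since $b$ has order $p$) while any two non-zero $a,b$ are adjacent, taking $m_2=1$ and $m_1=ab^{-1}\bmod p\in\{1,\ldots,p-1\}$. Hence $\mathcal{P}_s(\mathbb{Z}_p)=K_{p-1}+K_1=L(K_{1,p-1}+K_2)$, using that the line-graph construction commutes with disjoint unions. For $n=4$, direct enumeration of the adjacency condition on $\{0,1,2,3\}$ produces the edges $\{0,2\},\{1,2\},\{1,3\},\{2,3\}$, so $\mathcal{P}_s(\mathbb{Z}_4)$ is the paw graph (a triangle on $\{1,2,3\}$ with a pendant from $0$ to $2$); the tree $H_4$ on $\{u,v,w,x,y\}$ with edges $uv,ux,uy,vw$ has exactly this as its line graph, with $e_1,e_2,e_3$ incident to $u$ forming the triangle and $e_4=vw$ attached to $e_1=uv$. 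For $n=9$, one checks that $3m_2\equiv 0\pmod 9$ forces $3\mid m_2$ (so $0$ is adjacent to exactly $3$ and $6$) while for a unit $g$ one needs $9\mid m_2$ (so $0\not\sim g$), and every other pair of non-zero elements is adjacent (for two units use $m_2=1$; for $\{3,6\}$ use $m_1=2,m_2=1$; for $\{3,g\}$ or $\{6,g\}$ with $g$ a unit use $m_2=3$). Thus $\mathcal{P}_s(\mathbb{Z}_9)$ is the $K_8$ on the non-zero elements together with a further vertex $0$ joined only to $3,6$; taking $H_9=K_{1,8}$ with centre $u$ and leaves $v_1,\ldots,v_8$ together with one extra edge $v_1v_2$, the eight spokes form the $K_8$ in $L(H_9)$ and the chord $v_1v_2$ plays the role of $0$.

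The mildly non-obvious step is guessing the graphs $H_4$ and $H_9$, but this is not really an obstacle: the identity $\sum_{v}\binom{\deg_{H_n}(v)}{2}=|E(\mathcal{P}_s(\mathbb{Z}_n))|$ essentially forces the degree sequence of $H_n$, after which the star-with-a-chord construction falls out immediately. All remaining steps—the adjacency tabulations in $\mathbb{Z}_4$ and $\mathbb{Z}_9$ and the verification that the displayed $H_n$ have the claimed line graphs—are entirely routine.
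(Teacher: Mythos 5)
Your proposal is correct and follows essentially the same route as the paper: part (1) via completeness and $L(K_{1,n})=K_n$, and part (2) via Lemma 2.2 for necessity together with explicit realizing graphs for $n=4$, $n=9$, and $n=p$ prime (your $H_4$, $H_9$ are exactly the graphs the paper displays in its figures, and $K_{1,p-1}+K_2$ matches the paper's choice for the prime case). The only nitpick is that in the prime case the multiplier solving $m_1a\equiv b$ should be $ba^{-1}$ rather than $ab^{-1}$, which does not affect the argument.
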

\begin{proof} 1. If $G$ is non-cyclic then $\mathcal{P}_s(G)$ is a complete graph with $n$ vertices, and hence $\mathcal{P}_s(G)=L(K_{1, n})$.
\\ 2. Let $G$ be a cyclic group. Since strong power graphs of two isomorphic groups are isomorphic, so it is sufficient to prove the result for $G=\mathbb{Z}_n$. First suppose that $n=4, 9$ or a prime. We have $\mathcal{P}_s(\mathbb{Z}_4)$ is the line graph of the graph
\begin{center}
\includegraphics[width=2in]{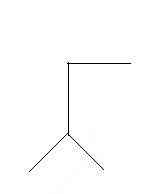}
\end{center}
 and $\mathcal{P}_s(\mathbb{Z}_9)$ is the line graph of the graph
 \begin{center}
  \includegraphics[width=4in]{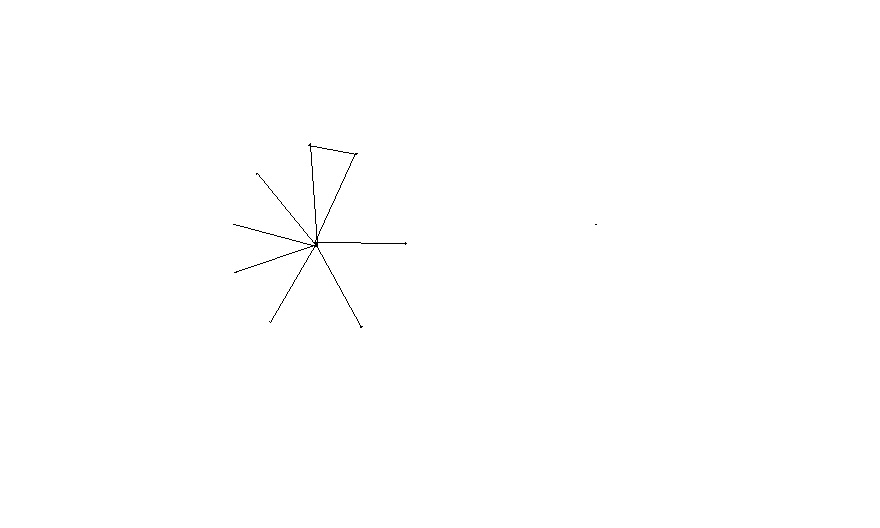}
  \end{center}

If $n=p$, a prime number then $\mathcal{P}_s(\mathbb{Z}_p)$ is the line graph of $K_{1, p-1}+K_2$.

converse, follows from the Lemma 2.2.
\end{proof}

The notion of Caley graph was first introduced by Sir Arthur Cayley (1878) to produce a large class of natural example of vertex transitive graphs. Let $G$ be a finite group of order $n$ and $S$ be a subset of $G$ not containing $e$, the identity of $G$ satisfying $S^{-1}=\{s^{-1}:s\in S\}=S$. Then the Cayley graph of $G$ is denoted by $C(G, S)$ is the graph whose vertex set is $G$ and two distinct vertices $g, h$ are edge connected if $gh^{-1}\in S$.
\begin{theorem}
Let $G$ be a group of order $n$. Then $\mathcal{P}_s(G)$ is Cayley graph of some group $G_1$ if and only if $G$ is noncyclic.
\end{theorem}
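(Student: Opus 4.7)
The plan is to treat the two directions separately. For the forward implication, if $G$ is noncyclic then, as noted in the introduction, $\mathcal{P}_s(G)$ is the complete graph $K_n$. Every complete graph is a Cayley graph: in particular $K_n = C(\mathbb{Z}_n, \mathbb{Z}_n \setminus \{0\})$, since for distinct $g, h \in \mathbb{Z}_n$ the difference $g-h$ lies in $\mathbb{Z}_n \setminus \{0\}$, and this set is closed under negation and omits the identity. So $\mathcal{P}_s(G)$ is a Cayley graph of $\mathbb{Z}_n$ (in fact of any group of order $n$, taking $S$ to be the set of non-identity elements).

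For the converse I would use the fact that every Cayley graph $C(G_1, S)$ is regular of degree $|S|$; it therefore suffices to exhibit two vertices of $\mathcal{P}_s(G)$ with different degrees whenever $G$ is cyclic of order $n \ge 3$. Since isomorphic groups have isomorphic strong power graphs, I may assume $G = \mathbb{Z}_n$.

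Next I would compute the degrees of two specific vertices. For the identity $e$, the condition $e^{m_1} = b^{m_2}$ with $1 \le m_1, m_2 < n$ reduces to $b^{m_2} = e$ with $m_2 < n$, which holds iff $\operatorname{ord}(b) < n$, i.e., iff $b$ is a non-generator. Hence $\deg(e) = n - \phi(n) - 1$. For a generator $g$: any other element has the form $b = g^j$ with $1 \le j \le n-1$, and then $g^{j} = b^{1}$ realises adjacency with $j, 1 < n$; on the other hand $g \not\sim e$ because $g^{m_1} = e$ forces $n \mid m_1$, incompatible with $m_1 < n$. Hence $\deg(g) = n - 2$. Since $\phi(n) \ge 2$ for $n \ge 3$, we get $\deg(e) \le n-3 < n-2 = \deg(g)$, so $\mathcal{P}_s(\mathbb{Z}_n)$ fails to be regular and cannot be a Cayley graph.

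The main obstacle is correctly unwinding the adjacency definition under the strict bounds $m_1, m_2 < n$; the key subtlety is that these strict inequalities prevent the identity from being adjacent to any generator, producing precisely the asymmetry in degrees that rules out the Cayley graph possibility. Once this is isolated, regularity becomes the right invariant to exploit and the rest is a short degree count.
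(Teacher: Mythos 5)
Your proof is correct and follows essentially the same route as the paper: completeness plus $S=G\setminus\{e\}$ for the noncyclic direction, and non-regularity of $\mathcal{P}_s(\mathbb{Z}_n)$ versus the regularity of every Cayley graph for the converse; you additionally supply the explicit degree count ($\deg(e)=n-\phi(n)-1$ versus $\deg(g)=n-2$) that the paper merely asserts. The only caveat, shared with the paper's own argument, is that for $n\le 2$ the graph $\mathcal{P}_s(\mathbb{Z}_n)$ is empty and hence regular (indeed a Cayley graph with $S=\emptyset$ under the paper's definition), so the converse genuinely requires the restriction $n\ge 3$ that you impose.
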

\begin{proof}
Let $G$ is a noncyclic group of order $n$. Then $\mathcal{P}_s(G)$ is a complete graph. Consider $G_1=G$ and $S=G\setminus \{e\}$, then clearly $\mathcal{P}_s(G)$  is Cayley graph of $G_1=G$.

If $G$ is a cyclic group of order $n$, then $\mathcal{P}_s(G)$ is not regular. But every Cayley graph is regular. Hence $\mathcal{P}_s(G)$ is not a Cayley graph of any group.
\end{proof}

\section{Vertex connectivity and chromatic number of  strong power graphs of  finite groups}
In this section we have discussed about vertex connectivity  and chromatic number of the strong power graph $\mathcal{P}_s(G)$ of any finite group G.

The {\bf vertex connectivity or connectivity} of a  graph $\Gamma $ is the minimum number of vertices in a vertex set $S$ such that $\Gamma$ $\setminus$ $S$ is disconnected or has only vertex. It is denoted by $ \kappa(\Gamma)$.Thus if $\Gamma$ is a complete graph with  $n$ vertices  then $\kappa(\Gamma)$=$n-1$, and if $\Gamma$ is a disconnected graph then $\kappa(\Gamma)$=$0$. The strong power graph $\mathcal{P}_s(G)$ of every finite noncyclic group $G$ is complete. Thus we have:

\begin{theorem}
For any non cyclic  finite group G  of order n the vertex connectivity  $\kappa(\mathcal{P}_s(G))$ is n-1.
\end{theorem}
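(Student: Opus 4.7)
The plan is to reduce this statement directly to two facts already in hand: the characterization of when $\mathcal{P}_s(G)$ is complete, and the value of the vertex connectivity of a complete graph. Because both ingredients are already available in the paper, the proof will be essentially a one-line argument, and my job here is really to lay out the logical chain cleanly.

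First I would invoke the characterization recalled in the introduction, namely that for a finite group $G$ of order $n$, the strong power graph $\mathcal{P}_s(G)$ is complete if and only if $G$ is noncyclic. Since the hypothesis of the theorem asserts that $G$ is noncyclic of order $n$, this immediately gives $\mathcal{P}_s(G) \cong K_n$. No case analysis on the structure of $G$ (abelian vs. non-abelian, $p$-group vs. not, etc.) is needed, since completeness is insensitive to any such distinction.

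Next I would apply the observation made in the opening paragraph of Section 3: by definition, if $\Gamma$ is a complete graph on $n$ vertices then $\kappa(\Gamma) = n-1$, because one must remove every vertex but one in order to reduce $\Gamma$ to a single-vertex graph (and removing fewer than $n-1$ vertices leaves a smaller complete graph, which is still connected and has at least two vertices). Combining this with the previous step yields $\kappa(\mathcal{P}_s(G)) = n-1$, completing the argument.

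There is no serious obstacle here; the statement is a direct corollary of material that has already been established. The only thing that needs minor care is making explicit the verification that $\kappa(K_n) = n-1$, since this is stated in the section introduction as a consequence of the definition rather than proved. If desired, one can briefly justify it by noting that removing any set $S$ of at most $n-2$ vertices from $K_n$ leaves a complete graph on at least two vertices, which is connected, while removing any $n-1$ vertices leaves a single vertex; thus the minimum disconnecting (or reducing-to-a-point) set has exactly $n-1$ elements.
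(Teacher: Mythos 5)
Your proposal is correct and follows exactly the paper's route: the theorem is presented there as an immediate consequence of the fact that $\mathcal{P}_s(G)$ is complete for noncyclic $G$ together with $\kappa(K_n)=n-1$. Your added verification that $\kappa(K_n)=n-1$ is a harmless (and slightly more careful) elaboration of what the paper states by definition.
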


\begin{theorem}
Let G be a  cyclic group of order $ n $ $>$1.
\begin{itemize}
\item[(a)]
If n is a prime number then  $\kappa(\mathcal{P}_s(G))$= $0$.
\item[(b)]
If n is a composite number then  $\kappa( \mathcal{P}_s(G))$ = $n-\phi(n)-1$.
\end{itemize}

\begin{proof}
Every non identity element of the group $G$is  of order  $n$,and no element of $G$  is adjacent with the identity  element $e$ of the group $G$.Thus $\mathcal{P}_s(G)$ is a disconnected graph with two components ${e}$ and $G$ $\setminus$ ${e}$, which implies that $\kappa(\mathcal{P}_s(G))$=$0$. If $n$ is composite  then $n-\phi(n)-1$ elements of the finite cyclic group $G$ are non identity and non generators. Each of these non identity non generators is adjacent to every other vertex, which implies that to make the graph $\mathcal{P}_s(G)$ disconnected we have to remove at least these $n-\phi(n)-1$ non identity non generators. Thus $\kappa(\mathcal{P}_s(G))$ $\geq$ $n-\phi(n)-1$. Since each generator is adjacent to every non identity element, so the removal of these $n-\phi(n)-1$ non identity non generators  makes the remaining graph disconnected with two components, one containing $e$ only and other containing all the generators. Thus $\kappa(\mathcal{P}_s(G))$=$n-\phi(n)-1$.
\end{proof}
\end{theorem}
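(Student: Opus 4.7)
The plan is to work out the adjacencies in $\mathcal{P}_s(\mathbb{Z}_n)$ explicitly and then read off the vertex connectivity from the resulting structure. Writing $G=\langle g\rangle$, the vertices fall into three classes: the identity $e$, the $\phi(n)$ generators, and the $n-\phi(n)-1$ non-identity non-generators. The key observation is that any two non-identity elements $a=g^{i}$ and $b=g^{j}$ (with $1\le i,j\le n-1$) satisfy $a^{j}=g^{ij}=b^{i}$, with both exponents strictly below $n$, so the subgraph on $G\setminus\{e\}$ is always a clique. Meanwhile $e$ is adjacent to $b$ if and only if $b^{m_{2}}=e$ for some $m_{2}<n$, equivalently the order of $b$ is strictly less than $n$.

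For part (a), if $n$ is prime then every non-identity element has order $n$, so $e$ is isolated and $\mathcal{P}_s(G)$ splits into the two components $\{e\}$ and $G\setminus\{e\}$; disconnectedness forces $\kappa(\mathcal{P}_s(G))=0$. For part (b), if $n$ is composite then the non-identity non-generators are exactly the elements of order strictly between $1$ and $n$, so $e$ is joined to each of them and to no generator. In other words, $\mathcal{P}_s(\mathbb{Z}_n)$ is $K_n$ with the $\phi(n)$ edges between $e$ and the generators removed, and consequently every non-identity non-generator is a universal vertex.

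Given this structure, the vertex connectivity is immediate. Removing all $n-\phi(n)-1$ non-identity non-generators separates $e$ from the clique on the generators, giving $\kappa(\mathcal{P}_s(G))\le n-\phi(n)-1$. Conversely, any cutset $S$ with $|S|<n-\phi(n)-1$ leaves at least one universal non-identity non-generator alive, through which the rest of the graph stays connected, so $\kappa(\mathcal{P}_s(G))\ge n-\phi(n)-1$. Since strong power graphs of isomorphic groups are isomorphic, treating $G=\mathbb{Z}_n$ suffices. The main technical step I anticipate is double-checking the admissibility of the exponents in the clique argument (so that the witnessing powers really lie in $[1,n-1]$); once that is clean, the vertex count in each of the three classes delivers the formula.
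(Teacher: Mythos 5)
Your proof is correct and takes essentially the same approach as the paper's: the non-identity non-generators are exactly the universal vertices, removing all of them separates $e$ from the clique of generators, and any smaller cutset leaves a universal vertex through which the graph remains connected. Your explicit verification of the adjacency structure (that $G\setminus\{e\}$ is a clique via $a^{j}=b^{i}$, and that $e$ is adjacent precisely to the elements of order less than $n$) is a detail the paper leaves implicit, but the argument is the same.
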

The {\bf chromatic number }of a graph $\Gamma$ is  the minimum number of colours required to colour the vertices so that adjacent vertices get distinct colours.
\begin{theorem}
 Let G be a  cyclic  group of order n,then the chromatic number $\chi(\mathcal{P}_s(G))$  of the strong power graph $\mathcal{P}_s(G)$ is n-1.\
 \begin{proof}
 The subgraph $\mathcal{P}_s(G)$ $\setminus$ $e$ is complete.
 Since the chromatic number of a complete  graph with $n-1$ vertices is $n-1$, so the chromatic number $\chi(\mathcal{P}_s(G))$ $\geq n-1$. Since the generators  of the cyclic group $G$ are not adjacent  with the identity  $e$ of $G$, either of of colours of the generators can be given to $e$ and hence $\chi(\mathcal{P}_s(G))$ $\leq n-1$. Thus $\chi(\mathcal{P}_s(G))$ =$n-1$.
 \end{proof}
\end{theorem}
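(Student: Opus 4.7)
The plan is to establish the equality $\chi(\mathcal{P}_s(G)) = n-1$ by matching lower and upper bounds. Write $G = \langle g \rangle$, a cyclic group of order $n > 1$.

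For the lower bound, I would first show that the subgraph of $\mathcal{P}_s(G)$ induced by $G \setminus \{e\}$ is the complete graph $K_{n-1}$. Take two distinct non-identity elements $a, b$. If both are non-generators, their orders $d_a, d_b$ are proper divisors of $n$, so $a^{d_a} = e = b^{d_b}$ with $1 \le d_a, d_b < n$, witnessing adjacency. Otherwise at least one of them, say $a$, is a generator, and then $b = a^k$ for a unique $k \in \{1, 2, \ldots, n-1\}$; we get $a^k = b^1$ with both exponents positive and strictly less than $n$, again giving adjacency. Hence $G \setminus \{e\}$ carries a clique of order $n-1$, which already forces $\chi(\mathcal{P}_s(G)) \ge n-1$.

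For the upper bound, the key observation is that $e$ is non-adjacent to every generator of $G$. Indeed, an edge between $e$ and a generator $h$ would require $h^{m_2} = e^{m_1} = e$ for some positive integer $m_2 < n$, contradicting $|h| = n$. Since at least one generator exists ($\phi(n) \ge 1$), I would take an optimal $(n-1)$-coloring of the clique on $G \setminus \{e\}$ and extend it by assigning $e$ the color already used on some generator. The resulting assignment is a proper coloring because $e$ and that generator are not adjacent, so $\chi(\mathcal{P}_s(G)) \le n-1$.

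The only mildly subtle point is the lower-bound step: one cannot simply use the orders of $a$ and $b$ as the exponents $m_1, m_2$ when those elements are generators, since then the orders equal $n$ and violate the strict inequality $m_i < n$. The case distinction above sidesteps this by switching to $a^k = b^1$ whenever a generator is involved, and the rest of the argument is routine.
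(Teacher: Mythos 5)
Your proof is correct and follows essentially the same route as the paper: a clique of size $n-1$ on $G\setminus\{e\}$ gives the lower bound, and reusing a generator's colour on $e$ (valid since $e$ is non-adjacent to every generator) gives the upper bound. You simply supply the case analysis verifying that $G\setminus\{e\}$ is complete, which the paper asserts without proof.
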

\begin{result}
Let G be a non-cyclic group of order n, then the chromatic number of the strong power graph $\mathcal{P}_s(G)$ is n.\
\begin{proof}
Since the strong power graph $\mathcal{P}_s(G)$ of the cyclic group $G$ is complete, hence the result follows.
\end{proof}
\end{result}

\section{Laplacian spectrum of the strong power graphs of  finite groups}
\begin{theorem}
For each positive integer $n\geq2,$ let $\bar s_i$$(i=1,2,\cdots m) $ are non generators of
$\mathbb{Z}_n$ then $\Theta(\mathcal{P}_s(\mathbb{Z}_n)),x)$
=$x(x-n)^{n-\phi(n)-1}(x-n+\phi(n)+1)(x-n+1)^{\phi(n)-1}$, where m=$n-\phi(n)-1$.
\begin{proof}
The Laplacian matrix $L(\mathcal{P}_s(\mathbb{Z}_n))$ is the  n times n  matrix whose rows and
columns are indexed in order by the non generators $\bar s_i$$(i=1,2,\cdots m) $ and the
generators of $\mathbb{Z}_n$ and $\bar0$ is in last position. Then
$L(\mathcal{P}_s(\mathbb{Z}_n)) =
\left(
  \begin{array}{cccccccccc}
    n-1 & -1  & \cdots & -1 & -1  & \cdots & -1 & -1 \\
    -1 & n-1  & \cdots & -1 & -1 & \cdots & -1 & -1  \\

    \vdots & \vdots  & \ddots& \vdots & \vdots & \ddots & \vdots & \vdots\\
    -1 & -1  & \cdots & n-1 & -1  & \cdots & -1 & -1 \\
    -1 & -1  & \cdots & -1 & n-2  & \cdots & -1 & 0 \\

    \vdots & \vdots  & \ddots & \vdots & \vdots  & \ddots & \vdots & \vdots \\
    -1 & -1  & \cdots & -1 & -1  & \cdots & n-2 & 0 \\
    -1 & -1  & \cdots & -1 &  0  & \cdots & 0 & n-\phi(n)-1  \\
  \end{array}
\right)$
 \\Each row and column sum of the above matrix is zero. Then the characteristic polynomial of  $L(\mathcal{P}_s(\mathbb{Z}_n))$ is
$\Theta(L(\mathcal{P}_s(\mathbb{Z}_n)),x)$=

$\left|
  \begin{array}{cccccccccc}
    x-(n-1)    & \cdots & 1 & 1 & \cdots &1 & 1 \\

    \vdots  & \ddots & \vdots & \vdots &\ddots &1  & \vdots \\
    1   & \cdots & x-(n-1) & 1  & \cdots &1 &1  \\

    1   & \cdots & 1 & x-(n-2) & \cdots &1  & 0 \\

    \vdots   & \ddots &  \vdots & \vdots & \ddots &1  & \vdots \\
    1   & \cdots & 1 & 1  & \cdots  &x-(n-2) & 0 \\
    1   & \cdots & 1 & 0   & \cdots  &0& x-(n-\phi(n)-1) \\
   \end{array}
\right|$

Multiplying the first row of $\Theta(L(\mathcal{P}_s(\mathbb{Z}_n)),x)$ by $ (x-1)$
and applying the row operation $R^{'}_1 $=$R_1-R_2-R_3- \cdots -R_{(n-1)}-R_n$. Then the above
determinant becomes
\begin{align*}
\Theta(L(\mathcal{P}_s(\mathbb{Z}_n)),x)=
\frac{x(x-n)}{(x-1)}\Theta(L\bar{s_1}(\mathcal{P}_s(\mathbb{Z}_n)),x).
\end{align*}
Again multiplying the first row of $\Theta(L\bar{s_1}(\mathcal{P}_s(\mathbb{Z}_n)),x)$
by $(x-2)$ and applying the row operation $R^{'}_1 $=$R_1-R_2-R_3- \cdots -R_{(n-1)}$
we get
\begin{align*}
\Theta(L\bar{s_1}(\mathcal{P}_s(\mathbb{Z}_n)),x)=
\frac{(x-1)(x-n)}{(x-2)}\Theta(L\bar{s}_1,\overline{s}_2(\mathcal{P}_s(\mathbb{Z}_n)),x),
\end{align*}
and so
\begin{align*}
\Theta(L(\mathcal{P}_s(\mathbb{Z}_n)),x)=
\frac{x(x-n)^{2}}{(x-2)}\Theta(L\bar{s_1},\bar{s}_2(\mathcal{P}_s(\mathbb{Z}_n)),x).
\end{align*}
Continuing in this way we get
\begin{align*}
\Theta(L(\mathcal{P}_s(\mathbb{Z}_n)),x)=
\frac{x(x-n)^{(n-\phi(n)-1)}}{(x-n+\phi(n)+1)}\Theta(L\bar{s}_1,\overline{s}_2 \cdots \bar{s}_m
(\mathcal{P}_s(\mathbb{Z}_n)),x),
\end{align*}
 where

$\Theta(L\bar{s}_1,\overline{s}_2 \cdots \bar{s}_m (\mathcal{P}_s(\mathbb{Z}_n)),x)$=
$\left|
   \begin{array}{ccccc}
     x-\lambda & 1 & 1 & \cdots & 0 \\
     1 & x-\lambda & 1 & \cdots & 0 \\
     \vdots & \vdots & \vdots & \ddots & \vdots \\
     1 & 1 & 1 & \cdots & 0 \\
     0 & 0 & 0 & \cdots & x-m \\
   \end{array}
 \right|$

  $\lambda=n-2$, $m=n-\phi(n)-1$  and order of the determinant is $\phi(n)+1$. Now expanding $\Theta(L\bar{s}_1,\overline{s}_2 \cdots \bar{s}_m
  \mathcal{P}_s(\mathbb{Z}_n),x))$ with respect to last row we get

\begin{align*}
\Theta(L\bar{s}_1,\overline{s}_2 \cdots \bar{s}_m (\mathcal{P}_s(\mathbb{Z}_n)),x))=&
(-1)^{2\phi(n)}(x-m)\left|
                        \begin{array}{cccc}
                          x-\lambda & 1 & \cdots & 1 \\
                          1 & x- \lambda & \cdots & 1 \\
                          \vdots & \vdots & \ddots & \vdots \\
                          1 & 1 & \cdots & x-\lambda \\
                        \end{array}
                      \right|
\\ &=(x-n+\phi(n)+1)(x-\lambda+\phi(n)+1-2)(x-\lambda-1)^{(\phi(n)-1)}
\\ &=(x-n+\phi(n)+1)^{2}(x-n+2-1)^{(\phi(n)-1)}
\\&=(x-n+\phi(n)+1)^{2}(x-n+1)^{(\phi(n)-1)}.
\end{align*}
Thus $ \Theta(L(\mathcal{P}_s(\mathbb{Z}_n)),x)=
x(x-1)^{n-\phi(n)-1}(x-n+\phi(n)+1)(x-n+1)^{\phi(n)-1}$.

\end{proof}

\end{theorem}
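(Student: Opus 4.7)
The plan is to compute the full Laplacian spectrum of $\mathcal{P}_s(\mathbb{Z}_n)$ by exhibiting an explicit basis of eigenvectors that respects the natural partition of the vertex set, rather than by repeatedly row-reducing the characteristic determinant. First I would establish the adjacency structure: after splitting $\mathbb{Z}_n$ into the identity $\bar 0$, the $m=n-\phi(n)-1$ non-identity non-generators $\bar s_1,\ldots,\bar s_m$, and the $\phi(n)$ generators, one checks that every non-identity non-generator is adjacent to every other vertex (degree $n-1$), every generator is adjacent to every non-identity element but not to $\bar 0$ (degree $n-2$), and $\bar 0$ is adjacent precisely to the non-identity non-generators (degree $m$). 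This yields the block form of $L(\mathcal{P}_s(\mathbb{Z}_n))$ displayed in the excerpt.

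Next I would build eigenvectors block by block. Since the $m\times m$ principal submatrix indexed by the non-identity non-generators has the form $nI-J$ while the rows it contributes to the remaining columns are constant along each class, any vector $e_{\bar s_i}-e_{\bar s_j}$ satisfies $Lv = nv$, giving $m-1$ linearly independent eigenvectors for the eigenvalue $n$. By the same token the $\phi(n)\times\phi(n)$ block on the generators is $(n-1)I-J$, with the corresponding cross-entries being $-1$ to non-generators and $0$ to $\bar 0$, so each difference $e_{\bar g_i}-e_{\bar g_j}$ is an eigenvector for the eigenvalue $n-1$, contributing $\phi(n)-1$ more.

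To finish, I would restrict to the three-dimensional subspace of vectors $v=(\alpha,\ldots,\alpha;\beta,\ldots,\beta;\gamma)$ that are constant on each class. The equation $Lv=\mu v$ collapses to a $3\times 3$ linear system in $(\alpha,\beta,\gamma)$ whose determinant factors (after a short simplification using $n-m=\phi(n)+1$) as $\mu(\mu-n)(\mu-m)$. The three roots supply the all-ones eigenvector for $\mu=0$, one further eigenvector for $\mu=n$ that raises its multiplicity to $m$, and a simple eigenvector for $\mu=m=n-\phi(n)-1$. Collecting multiplicities gives $1 + m + 1 + (\phi(n)-1) = n$, confirming that the spectrum is complete, and multiplying the factors yields the characteristic polynomial
\[
x\,(x-n)^{\,n-\phi(n)-1}\,\bigl(x-n+\phi(n)+1\bigr)\,(x-n+1)^{\phi(n)-1}.
\]

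The main obstacle is the careful justification of the adjacency structure — in particular the claim that a generator $\bar g$ is adjacent in $\mathcal{P}_s(\mathbb{Z}_n)$ to every nonzero element $\bar a$, which requires expressing $\bar a = m_2\bar g$ with $1\le m_2<n$ via the invertibility of $\bar g$ modulo $n$, together with the complementary fact that $\bar 0$ has no generator as neighbour because $\bar 0 = m_2\bar g$ forces $n\mid m_2$. Once the graph structure is pinned down, all the eigenvector computations are routine.
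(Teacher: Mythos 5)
Your proposal is correct, and it takes a genuinely different route from the paper. The paper computes $\det(xI-L)$ directly by a chain of row operations (multiply the first row by $x-1$, subtract all other rows, peel off a factor $x(x-n)/(x-1)$, and iterate over the $m$ non-generator rows before expanding the remaining $(\phi(n)+1)\times(\phi(n)+1)$ determinant along its last row). You instead exploit the fact that the partition of $\mathbb{Z}_n$ into non-identity non-generators, generators, and $\bar 0$ is equitable: the difference vectors $e_{\bar s_i}-e_{\bar s_j}$ and $e_{\bar g_i}-e_{\bar g_j}$ give the eigenvalues $n$ and $n-1$ with multiplicities $m-1$ and $\phi(n)-1$, and the class-constant subspace reduces to a $3\times 3$ quotient whose characteristic polynomial is $\mu(\mu-n)(\mu-m)$ (I checked: with $M_{11}=\phi(n)+1$, $M_{12}=-\phi(n)$, $M_{13}=-1$, $M_{21}=-m$, $M_{22}=m$, $M_{23}=0$, $M_{31}=-m$, $M_{32}=0$, $M_{33}=m$, the determinant does factor as claimed using $n-m=\phi(n)+1$). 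The dimension count $(m-1)+(\phi(n)-1)+3=n$ closes the argument. Your route buys transparency and robustness: it produces the eigenvectors explicitly, makes the multiplicities self-evident, and avoids the bookkeeping of iterated determinant manipulations, which in the paper's version actually introduces slips (an intermediate line acquires a spurious square on $(x-n+\phi(n)+1)$, and the final displayed polynomial reads $(x-1)^{n-\phi(n)-1}$ where $(x-n)^{n-\phi(n)-1}$ is meant). The only things to tidy are the degenerate cases $m=0$ (e.g.\ $n$ prime) and $m=1$, where one or two of the eigenvector families are empty but the count still works out, and the adjacency verifications you already flagged as the real content.
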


Let G is a cyclic group of order n, then $G$ is isomorphic to $\mathbb{Z}_n$; and the strong
power graphs $\mathcal{P}_s(G)$ and $\mathcal{P}_s(\mathbb{Z}_n)$ of $G$ and $\mathbb{Z}_n$
respectively are isomorphic. Hence the graphs $\mathcal{P}_s(G)$ and
$\mathcal{P}_s(\mathbb{Z}_n)$ have the same Laplacian spectrum. So by theorem $2.1$ we have :
 \begin{proposition}
 If $G$ is a cyclic group of order $n$,then the laplacian spectrum of  $\mathcal{P}_s(G)$ is
\begin{align*}
 \left(
  \begin{array}{cccc}
    0 & n & n-\phi(n)-1  & n-1 \\
    1 & n-\phi(n)-1 & 1 & \phi(n)-1 \\
  \end{array}
\right)
\end{align*}
\end{proposition}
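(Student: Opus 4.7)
The plan is to deduce the statement directly from Theorem 4.1 by reducing to the case $G = \mathbb{Z}_n$. First I would observe that every cyclic group of order $n$ is isomorphic to $\mathbb{Z}_n$, and that the strong power graph is defined purely in terms of the power-relations $a^{m_1}=b^{m_2}$, which are preserved under any group isomorphism. Hence a group isomorphism $G \to \mathbb{Z}_n$ induces a graph isomorphism $\mathcal{P}_s(G) \to \mathcal{P}_s(\mathbb{Z}_n)$, which in turn conjugates the Laplacian matrices by a permutation matrix. The two Laplacians are therefore similar and share the same characteristic polynomial and hence the same spectrum.

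Next I would apply Theorem 4.1, which gives
\[
\Theta(L(\mathcal{P}_s(\mathbb{Z}_n)),x) = x(x-n)^{n-\phi(n)-1}(x-n+\phi(n)+1)(x-n+1)^{\phi(n)-1}.
\]
Reading off the roots of this polynomial with their multiplicities produces exactly the four entries in the claimed table: the eigenvalue $0$ with multiplicity $1$, the eigenvalue $n$ with multiplicity $n-\phi(n)-1$, the eigenvalue $n-\phi(n)-1$ with multiplicity $1$, and the eigenvalue $n-1$ with multiplicity $\phi(n)-1$.

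There is essentially no obstacle beyond a brief sanity check: the four listed multiplicities sum to $1 + (n-\phi(n)-1) + 1 + (\phi(n)-1) = n$, which matches the size of the Laplacian matrix. The only subtlety worth flagging is that in degenerate cases some of the listed values may coincide or have zero multiplicity; for instance, when $n$ is prime, $n-\phi(n)-1 = 0$ and the corresponding factor collapses into the $x$-factor, recovering the known fact that $\mathcal{P}_s(\mathbb{Z}_p)$ is disconnected with Laplacian spectrum $\{0^{(2)}, (p-1)^{(p-2)}\}$. Since these collapses happen automatically at the level of the factorisation, no additional case analysis is needed and the proof really is just the remark that Theorem 4.1 already records the full spectrum.
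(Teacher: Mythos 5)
Your proposal is correct and is essentially the paper's own argument: the authors likewise note that $G\cong\mathbb{Z}_n$ induces an isomorphism of strong power graphs, hence equality of Laplacian spectra, and then read the spectrum off the factorisation in Theorem 4.1. Your extra sanity checks (multiplicities summing to $n$, the collapse when $n$ is prime) go slightly beyond what the paper records but change nothing in substance.
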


For any  non cyclic group $G$, the strong power graph $\mathcal{P}_s(G)$ is complete \cite{Chattopa}. So their
Laplacian spectrum is given by :
\begin{proposition}
Let $G$ be a noncyclic group of order $n$, then the Laplacian spectrum of  $\mathcal{P}_s(G)$
is
  \begin{align*}
  \left(
  \begin{array}{cc}
   0 & n \\
   1 & n-1 \\
   \end{array}
   \right)
   \end{align*}
\end{proposition}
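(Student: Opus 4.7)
The plan is to reduce the proposition entirely to a standard computation of the Laplacian spectrum of the complete graph $K_n$. The excerpt already records the fact (attributed to Chattopadhyay et al.) that whenever $G$ is a finite noncyclic group of order $n$, the strong power graph $\mathcal{P}_s(G)$ coincides with $K_n$; once this is invoked, no group-theoretic information remains and the problem is purely linear-algebraic.

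Given $\mathcal{P}_s(G)=K_n$, I would write $L(K_n)=D-A=(n-1)I-(J-I)=nI-J$, where $J$ is the $n\times n$ all-ones matrix and $I$ is the identity. The matrix $J$ has rank $1$ with eigenvalues $n$ (simple, with eigenvector the all-ones vector $\mathbf{1}$) and $0$ (with multiplicity $n-1$, corresponding to the hyperplane $\mathbf{1}^{\perp}$). Subtracting from $nI$ therefore gives eigenvalues $n-n=0$ on the span of $\mathbf{1}$ and $n-0=n$ on the orthogonal complement. Thus the Laplacian spectrum of $K_n$, and hence of $\mathcal{P}_s(G)$, is $0$ with multiplicity $1$ and $n$ with multiplicity $n-1$, which is exactly the matrix displayed in the proposition.

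There is essentially no obstacle here: the entire content is the identification $\mathcal{P}_s(G)=K_n$ for noncyclic $G$, which is cited, together with the textbook diagonalization of $nI-J$. If desired, one could alternatively obtain the same spectrum as a degenerate case of the formula in Theorem~4.1 by noting that for a noncyclic group there are no ``generator'' vertices to separate from the rest, so the blocks of eigenvalues $n-\phi(n)-1$ and $n-1$ disappear and only $0$ (once) and $n$ (with multiplicity $n-1$) survive; but the direct argument via $L(K_n)=nI-J$ is cleaner and suffices.
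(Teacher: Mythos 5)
Your proposal is correct and matches the paper's approach: the paper likewise reduces the statement to the fact that $\mathcal{P}_s(G)$ is complete for noncyclic $G$ and then simply quotes the Laplacian spectrum of $K_n$, without even writing out the $L(K_n)=nI-J$ diagonalization that you supply. Your version is, if anything, slightly more complete than the paper's.
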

The algebraic connectivity of a graph $\Gamma$, denoted by $a(\Gamma)$, is the second smallest Laplcian eigen value of $\Gamma$ \cite{f}. Now, the algebraic connectivity has received special attention due to its huge applications on connectivity problems, isoperimetric numbers, genus, combinatorial optimizations and many other problems.    Thus by Theorem $2.1$ and proposition $2.3$ we have:

\begin{corollary}
Let $G$ be a group of order $n$.
\begin{enumerate}
\item[1] If $G$ is a cyclic group then $a(\mathcal{P}_s(G))=n-\phi(n)-1$.
\item[2]If $G$ is a noncyclic group then  $ a(\mathcal{P}_s(G))=n $.
\end{enumerate}
\end{corollary}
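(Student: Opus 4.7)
The plan is to read off the second smallest Laplacian eigenvalue directly from the spectra computed in the two preceding propositions, since by definition $a(\Gamma)$ is precisely that eigenvalue.

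For the noncyclic case in part (2), the spectrum consists of $0$ with multiplicity $1$ and $n$ with multiplicity $n-1$. The eigenvalue $0$ appears with multiplicity one (reflecting that $\mathcal{P}_s(G)$, being complete, is connected), so the second smallest eigenvalue is $n$, giving $a(\mathcal{P}_s(G))=n$ at once.

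For the cyclic case in part (1), the spectrum lists the values $0$, $n-\phi(n)-1$, $n-1$, and $n$ with respective multiplicities $1$, $1$, $\phi(n)-1$, and $n-\phi(n)-1$. What I need to verify is that $n-\phi(n)-1$ is indeed the second smallest entry in this list. Using only the elementary bounds $1\leq \phi(n)\leq n-1$, I obtain the chain $0 \leq n-\phi(n)-1 \leq n-2 < n-1 < n$, which places $n-\phi(n)-1$ immediately above $0$ in the sorted spectrum. Hence $a(\mathcal{P}_s(G))=n-\phi(n)-1$.

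The entire argument amounts to sorting four numbers, so there is no real obstacle; the only bookkeeping step is checking that $0$ appears with multiplicity one when $n$ is composite, which is exactly the content of the cyclic-case proposition. It is worth flagging that when $n$ is prime the formula collapses to $n-\phi(n)-1=0$, which correctly registers the fact (noted earlier in the paper) that $\mathcal{P}_s(\mathbb{Z}_p)$ is disconnected, the two zero eigenvalues arising from the two connected components $\{e\}$ and $G\setminus\{e\}$.
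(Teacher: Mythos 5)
Your argument is correct and is exactly the paper's route: the paper states this corollary as an immediate consequence of the two spectrum propositions, and your contribution is simply to make explicit the sorting step $0\leq n-\phi(n)-1\leq n-2<n-1<n$ that identifies the second smallest eigenvalue. Your closing remark about the prime case (where $n-\phi(n)-1=0$ and the graph is disconnected) is a worthwhile sanity check that the paper omits.
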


Another important application of Laplacian spectrum is on the number of spanning trees of a graph. A spanning tree $T$ of a graph $\Gamma$ is a subgraph which is a tree having same vertex set is same as   $\Gamma$. If $\lambda_1$
$\geq$$\lambda_2$$\geq$$\lambda_3$$\geq$$\cdots$$\geq$$\lambda_n$=$0$  are the Laplacian eigenvalues of a graph $\Gamma$ of $n$-vertices, then the number of spanning trees of $\Gamma$ is denoted by $\tau(\Gamma)$ is given by $\frac{\lambda_1 \lambda_2 \cdots \lambda_{n-1}}{n}$ [Theorem 4.11; \cite{Bapat}]. Thus from the Proposition $2.2$ and Proposition $2.3$ we have:
\begin{corollary}
Let $G$ be a group of order $n$.
\begin{enumerate}
\item[1] If $G$ is a cyclic group then $\tau(\mathcal{P}_s(G))=n^{n-\phi(n)-2}(n-\phi(n)-1)(n-1)^{\phi(n)-1}$.
\item[2] If $G$ is a noncyclic group then $\tau(\mathcal{P}_s(G))=n^{n-2}$.
\end{enumerate}

\end{corollary}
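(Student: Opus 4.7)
The plan is to apply the Matrix-Tree-type identity cited just before the corollary, namely
\[
\tau(\Gamma) \;=\; \frac{1}{n}\,\lambda_1\lambda_2\cdots\lambda_{n-1},
\]
where $\lambda_1\geq\lambda_2\geq\cdots\geq\lambda_{n-1}\geq\lambda_n=0$ are the Laplacian eigenvalues of the $n$-vertex graph $\Gamma$, directly to the two Laplacian spectra that have already been computed in Propositions~2.2 and~2.3. So the whole argument reduces to reading off the nonzero eigenvalues with correct multiplicities and multiplying them together.

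For part~(2), the noncyclic case, $\mathcal{P}_s(G)$ is the complete graph $K_n$, so by Proposition~2.3 the Laplacian spectrum consists of $0$ with multiplicity $1$ and $n$ with multiplicity $n-1$. Substituting into the formula instantly gives $\tau(\mathcal{P}_s(G))=n^{n-1}/n=n^{n-2}$, which is just Cayley's classical formula and needs no further work.

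For part~(1), the cyclic case, I would invoke Proposition~2.2, which lists the Laplacian eigenvalues of $\mathcal{P}_s(\mathbb{Z}_n)$ as $0$ (once), $n$ (with multiplicity $n-\phi(n)-1$), $n-\phi(n)-1$ (once), and $n-1$ (with multiplicity $\phi(n)-1$). A quick sanity check confirms that these multiplicities sum to $1+(n-\phi(n)-1)+1+(\phi(n)-1)=n$, as they must. Taking the product of the nonzero eigenvalues yields
\[
\lambda_1\lambda_2\cdots\lambda_{n-1}
 \;=\; n^{\,n-\phi(n)-1}\,(n-\phi(n)-1)\,(n-1)^{\phi(n)-1},
\]
and dividing by $n$ produces the claimed expression $n^{\,n-\phi(n)-2}(n-\phi(n)-1)(n-1)^{\phi(n)-1}$.

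Since both cases are plug-and-chug from previously established spectra, there is no real obstacle; the only thing that needs a brief justification is that the Laplacian spectrum of $\mathcal{P}_s(G)$ depends only on the isomorphism type of $G$, so the cyclic case may indeed be deduced from the $\mathbb{Z}_n$ computation, a point already noted in the paragraph preceding Proposition~2.2. The finest care required is merely clean bookkeeping of the exponents $n-\phi(n)-1$ and $\phi(n)-1$ when the factor of $1/n$ is absorbed into the power of $n$.
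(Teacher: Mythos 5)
Your proposal is correct and is exactly the argument the paper intends: the corollary is stated as an immediate consequence of the spanning-tree formula $\tau(\Gamma)=\lambda_1\cdots\lambda_{n-1}/n$ applied to the Laplacian spectra recorded in the two preceding propositions, and your bookkeeping of the multiplicities and the division by $n$ matches the stated formulas in both cases.
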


The graph energy is defined in terms of the spectrum of the adjacency matrix. Depending on the  well-developed spectral theory of the Laplacian matrix, recently Gutman et. al \cite {GZ} have defined the Laplacian energy of a graph $\Gamma$ with $n$ vertices and $m$ edges as: $LE(\Gamma)=\sum_{i=1}^{n}|\lambda_i-\frac{2m}{n}|$, where $\lambda_1$
$\geq$$\lambda_2$$\geq$$\lambda_3$$\geq$$\cdots$$\geq$$\lambda_n$=$0$ are the Laplacian eigen values of the graph $\Gamma$. This definition has been adjusted so that the Laplcian energy becomes equal to the energy for any regular graph. For various properties of Laplacian energy we refer \cite{GZF}, \cite{HGW}, \cite{Y}. From Proposition $2.2$ and Proposition $2.3$ we have
\begin{corollary}
Let $G$ be a finite group of order $n$.
\begin{enumerate}
\item[1]If $G$ is cyclic then $LE(\mathcal{P}_s(G))=2(n-1)-\frac{4\phi(n)}{n}$.
\item[2]If $G$ is noncyclic then $LE(\mathcal{P}_s(G))=2(n-1)$.
\end{enumerate}
\end{corollary}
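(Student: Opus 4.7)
The strategy is to invoke the Laplacian spectra established in Propositions 2.2 and 2.3 and substitute directly into the definition $LE(\Gamma)=\sum_{i=1}^{n}|\lambda_i - \tfrac{2m}{n}|$; no new structural results about $\mathcal{P}_s(G)$ are needed. The only preliminary calculation is that the average eigenvalue equals the average degree, i.e.\ $\tfrac{2m}{n}=\tfrac{1}{n}\sum_i\lambda_i$, so in each case one first reads the mean off the multiset of eigenvalues and then sums the deviations.

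For the noncyclic case, Proposition 2.3 records the spectrum as $0$ with multiplicity $1$ and $n$ with multiplicity $n-1$. Thus $\sum_i\lambda_i = n(n-1)$, which gives $\tfrac{2m}{n}=n-1$. Plugging into the definition one obtains $LE = |0-(n-1)| + (n-1)\,|n-(n-1)| = (n-1)+(n-1) = 2(n-1)$, which settles part~(2).

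For the cyclic case, write $\phi=\phi(n)$. Proposition 2.2 gives the spectrum $0^{(1)},\ n^{(n-\phi-1)},\ (n-\phi-1)^{(1)},\ (n-1)^{(\phi-1)}$. The plan has three steps: first expand $\sum_i\lambda_i = n(n-\phi-1)+(n-\phi-1)+(n-1)(\phi-1)$ and simplify to read off $\tfrac{2m}{n}$; second, determine the sign of $\lambda_i - \tfrac{2m}{n}$ on each of the four eigenspaces (the two large classes $n$ and $n-1$ will lie above the mean while $0$ and $n-\phi-1$ lie below, at least for $n\ge 3$); third, form the weighted sum of the resulting absolute values, one term per eigenspace counted with its multiplicity, and simplify.

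The computations for the noncyclic case are essentially automatic, so the entire technical burden is the bookkeeping in the cyclic case. One has to handle rational contributions in $\phi/n$ and $\phi^{2}/n$ coming from the four different eigenspaces; the main thing to watch for is the cancellation of the $\phi^{2}/n$ terms between the $n$-eigenspace (of multiplicity $n-\phi-1$) and the $(n-1)$-eigenspace (of multiplicity $\phi-1$), after which the remaining linear terms in $\phi$ and $\phi/n$ consolidate into the closed form asserted in the corollary. Verifying a small example such as $n=4$ or $n=6$ first is a sensible sanity check before attempting the general simplification.
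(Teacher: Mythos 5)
Your strategy is exactly the one the paper intends (the paper gives no computation at all, merely citing Propositions 2.2 and 2.3), and your noncyclic case is complete and correct. The cyclic case, however, is only sketched, and the single step you do not carry out --- ``the remaining linear terms in $\phi$ and $\phi/n$ consolidate into the closed form asserted in the corollary'' --- is precisely the step that fails. Carrying out your own plan: Proposition 2.2 gives $2m=n(n-\phi-1)+(n-\phi-1)+(n-1)(\phi-1)=n^{2}-n-2\phi$, so $\tfrac{2m}{n}=n-1-\tfrac{2\phi}{n}$; the absolute deviations of the four eigenvalues $0$, $n$, $n-\phi-1$, $n-1$ from this mean are $n-1-\tfrac{2\phi}{n}$, $1+\tfrac{2\phi}{n}$, $\phi-\tfrac{2\phi}{n}$ and $\tfrac{2\phi}{n}$, with multiplicities $1$, $n-\phi-1$, $1$ and $\phi-1$. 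The $\phi^{2}/n$ terms cancel as you predict, but the total is
\[
LE(\mathcal{P}_s(G))=2(n-1)+2\phi(n)-\frac{8\phi(n)}{n},
\]
not $2(n-1)-\tfrac{4\phi(n)}{n}$. The sanity check you recommend but did not perform exposes this at once: for $n=4$ the spectrum is $\{0,1,3,4\}$, the mean degree is $2$, and $LE=2+1+1+2=6$, whereas the corollary's formula gives $4$ (the displayed formula above gives $6$; the same discrepancy appears at $n=6$, where the true value is $34/3$ against the claimed $26/3$). So the gap is twofold: the decisive computation is asserted rather than done, and when done it shows that part (1) of the statement is itself incorrect as printed (granting Proposition 2.2, which checks out at $n=4$) and should read $LE(\mathcal{P}_s(G))=2(n-1)+2\phi(n)-\tfrac{8\phi(n)}{n}$.
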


\section{Permanent of the Laplacian of strong power graph }

Let us recall the definition of permanent of a square matrix. For any square matrix $A=(a_{ij}) $  of  order $n$, the  permanent  of A is denoted by per(A) and per(A) =$\sum_{\sigma \in S_n}a_{1\sigma(1)}a_{2\sigma(2)}\cdots a_{n\sigma(n)}$. It is quite difficult to determine the permanent of a square matrix. In this section we have determined the permanent of the Laplacian matrix of strong power graph of any finite group explicitly. Our method is based on the following observation. Let A =$(a_{ij})$ be a matrix of order $n$, then per(A) is equal to the coefficient  of $x_1x_2\cdots x_n$ in the expression $(a_{11}x_1+a_{12}x_2+\cdots a_{1n}x_1n)(a_{21}x_1+a_{22}x_2+\cdots a_{2n}x_n) \cdots (a_{n1}x_1+a_{n2}x_2+ \cdots a_{nn}x_n)$. Throughout the rest of this section we make the following convention: for any $n$-functions $f_1(x)f_2(x)\cdots \widehat{f_i(x)}\cdots f_n(x)=f_1(x)f_2(x) \cdots f_{i-1}(x)f_{i+1}(x)\cdots f_n(x) $ and for any $n$ variables $x_1, x_2, \cdots x_n $, the coefficient of $x_{n_1}x_{n_2} \cdots x_{n_k}$ in a polynomial $F(x_1, x_2, \cdots x_n)$ will be denoted by $C_{x_{n_1}x_{n_2}\cdots x_{n_k}} ( F(x_1, x_2, \cdots x_n))$. Here we first find the permanent of adjacency matrix of any graph with $m+n+1$ vertices such that $m+n$ vertices form a clique and the rest vertex is adjacent with $n$ vertices. We have the following Lemma,
\begin{lemma}
The permanent of adjacency matrix of any graph with $m+n+1$ vertices such that $m+n$ vertices form a clique and the rest vertex is adjacent with $n$ vertices is \\
$$n[\sum_{r=1}^{m+n}(-1)^{r-1}(m+n-r)!\{\left(
                                      \begin{array}{c}
                                        m+n-1 \\
                                        r-1 \\
                                      \end{array}
                                    \right)
  +(n-1)\left(
          \begin{array}{c}
            m+n-2 \\
            r-1 \\
          \end{array}
        \right) \}].$$
\end{lemma}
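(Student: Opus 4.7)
The plan is to compute $\mathrm{per}(A)$ by two successive Laplace expansions — first along the row, then along the column of the extra vertex $v_{0}$ — exploiting the symmetry among its $n$ neighbours to reduce everything to two classical permanents of $J-I$ type matrices, each of which is then evaluated by the rook-polynomial inclusion--exclusion formula $\mathrm{per}(J-N)=\sum_{r}(-1)^{r}(\mathrm{size}-r)!\,p_{r}(N)$, where $p_{r}(N)$ counts $r$-element partial matchings among the $1$-entries of $N$.

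First I would set up notation. Label the clique vertices $v_{1},\ldots,v_{m+n}$ so that $v_{m+1},\ldots,v_{m+n}$ are the $n$ neighbours of $v_{0}$, and place $v_{0}$ last in the ordering of rows and columns of $A$. The last row then has $1$'s in exactly the columns $m+1,\ldots,m+n$, so Laplace expansion along that row gives $\mathrm{per}(A)=\sum_{j=m+1}^{m+n}\mathrm{per}(M_{j})$, where $M_{j}$ deletes the last row and the $j$-th column. Since the transposition of any two neighbours of $v_{0}$ is a graph automorphism, the $M_{j}$'s are pairwise related by simultaneous row/column permutations and share a common permanent; writing $B:=M_{m+n}$ gives $\mathrm{per}(A)=n\cdot\mathrm{per}(B)$.

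Next I would expand $\mathrm{per}(B)$ along its last column. The key observation is that the row of $B$ indexed by $v_{m+n}$ has become the all-ones row (its single $0$ had sat in the deleted column), while the last column (inherited from $v_{0}$'s column of $A$) has $1$'s exactly in rows $m+1,\ldots,m+n$. Thus $\mathrm{per}(B)=\sum_{i=m+1}^{m+n}\mathrm{per}(C_{i})$. The sub-minor $C_{m+n}$ deletes the all-ones row and is precisely $J_{m+n-1}-I_{m+n-1}$, whose permanent is the derangement number $D_{m+n-1}$; the remaining $n-1$ sub-minors retain the all-ones row and, by the symmetry of $v_{m+1},\ldots,v_{m+n-1}$, share a common permanent $Q$. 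Consequently $\mathrm{per}(A)=n\bigl[D_{m+n-1}+(n-1)Q\bigr]$.

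Finally I would evaluate both permanents by inclusion--exclusion. For $J_{m+n-1}-I_{m+n-1}$ the forbidden positions form a complete diagonal of size $m+n-1$, disjoint in rows and columns, so $p_{r}=\binom{m+n-1}{r}$. For the matrix realising $Q$, the all-ones row contributes no forbidden entry, so the diagonal shortens to $m+n-2$ and $p_{r}=\binom{m+n-2}{r}$. A trivial re-indexing $r\mapsto r-1$ in each alternating sum, combined with the outer $n\bigl[\,\cdot\,+(n-1)\,\cdot\,\bigr]$ structure, reproduces the stated formula exactly. The main obstacle I anticipate is the bookkeeping across the two expansions: one must check at each stage that the "remaining" sub-minors really are isomorphic (so that the symmetry collapse is legitimate) and track the all-ones row, whose very existence hinges on the convenient choice $j=m+n$ in the first expansion and whose fate in the second expansion — surviving versus being deleted — is precisely what separates the $\binom{m+n-1}{r-1}$ term from the $\binom{m+n-2}{r-1}$ term in the final answer.
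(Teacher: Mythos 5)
Your proof is correct, and it takes a genuinely different route from the paper's. The paper computes $\mathrm{per}(A)$ as the coefficient of $x_1x_2\cdots x_{m+n+1}$ in the product of the row linear forms $\prod_i\bigl(\sum_j a_{ij}x_j\bigr)$, expanding that product and extracting the coefficient in one global computation. You instead perform two sign-free Laplace expansions along the row and column of the special vertex, use the automorphisms permuting its $n$ neighbours to collapse the resulting minors into two classes, and identify the two surviving permanents as the derangement number $D_{m+n-1}=\mathrm{per}(J_{m+n-1}-I_{m+n-1})$ and a near-derangement permanent $Q=\mathrm{per}(J_{m+n-1}-N)$ with $N$ a partial diagonal of size $m+n-2$, each evaluated by the forbidden-position inclusion--exclusion $\sum_r(-1)^r p_r(N)(k-r)!$. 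The two methods rest on the same underlying alternating sums --- the paper's $\sum_{r}(-1)^{r-1}(m+n-r)!\binom{m+n-1}{r-1}$ and $\sum_{r}(-1)^{r-1}(m+n-r)!\binom{m+n-2}{r-1}$ are exactly your $D_{m+n-1}$ and $Q$ --- but your decomposition $\mathrm{per}(A)=n\bigl[D_{m+n-1}+(n-1)Q\bigr]$ makes the combinatorial structure explicit and keeps the bookkeeping local, whereas the paper must track which variables have been consumed at each stage of the coefficient extraction. Your symmetry reductions are legitimate (the permanent is invariant under independent row and column permutations, so minors carried to one another by a graph automorphism preserving the deleted index sets have equal permanents), and the final re-indexing works because $\binom{m+n-2}{m+n-1}=0$ lets you extend the $Q$-sum to $r=m+n$; I verified the resulting formula agrees with the paper's on small cases such as $K_3$, the path $P_3$, and $K_4$ minus an edge.
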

\begin{proof}
The required permanent is the coefficient of $x_1x_2 \cdots x_{m+n+1}$ in
$F(x_1,x_2,\cdots x_{m+n+1})=(X-x_1)(X-x_2)\cdots(X-x_m)(X-x_{m+1}+x_{m+n+1})(X-x_{m+2}+x_{m+n+1})$ $\cdots(X-x_{m+n}+x_{m+n+1})(x_{m+1}+x_{m+2}+\cdots x_{m+n})$, where $X=x_1+x_2+\cdots x_{m+n}$.

Now we have  $ F(x_1, x_2, \cdots x_{m+n+1})=\prod_{i=1}^{m}(X-x_i) [ x_{m+n+1}^{n} +\cdots+ x_{m+n+1}\sum_{i=1}^{n}(X-x_{m+1})(X-x_{m+2})\cdots\widehat{(X-x_{m+i})}\cdots(X-x_{m+n}) +\prod_{i=1}^{n}(X-x_{m+i})](\sum_{i=1}^{n}x_{m+i})$ which shows that

$C_{x_1x_2 \cdots x_{m+n+1}}(F(x_1, x_2, \cdots x_{m+n+1}))$
\begin{align*}
&=C_{x_1x_2 \cdots x_{m+n}} \prod_{i=1}^{m}(X-x_i)[\sum_{i=1}^{n}(X-x_{m+1})(X-x_{m+2})\cdots\widehat{(X-x_{m+i})} \cdots(X-x_{m+n})](\sum_{i=1}^{n}x_{m+i}).\\
&=n C_{x_1x_2 \cdots x_{m+n}}(x_{m+1}\prod_{i=1}^{m}(X-x_i)[\sum_{i=1}^{n}(X-x_{m+1})(X-x_{m+2})\cdots\widehat{(X-x_{m+i})} \cdots(X-x_{m+n})])\\
&=n C_{x_1x_2 \cdots x_m x_{m+2} \cdots x_{m+n}}(\prod_{i=1}^{m}(X-x_i)[\sum_{i=1}^{n}(X-x_{m+1})(X-x_{m+2})\cdots\widehat{(X-x_{m+i})} \cdots(X-x_{m+n})])\\
&=n[C_{x_1x_2 \cdots x_m x_{m+2} \cdots x_{m+n}}((X-x_1)\cdots (X-x_m)(X-x_{m+2})\cdots (X-x_{m+n}))+ \\
& \hspace{1.5cm} C_{x_1x_2 \cdots x_m x_{m+2} \cdots x_{m+n}}((X-x_1)\cdots (X-x_m)(X-x_{m+1})(X-x_{m+3})\cdots (X-x_{m+n}))+\cdots \\  \cdots
& \hspace{3.5cm} +C_{x_1x_2 \cdots x_m x_{m+2} \cdots x_{m+n}}((X-x_1)\cdots (X-x_m)(X-x_{m+1})\cdots (X-x_{m+n-1}))]
\end{align*}

Now, $(X-x_1)(X-x_2)\cdots(X-x_m)(X-x_{m+2})\cdots(X-x_{m+n})= X^{m+n-1}-X^{m+n-2} \sum_{i\neq(m+1)}x_i+\cdots+(-1)^{m+n-1}x_1x_2 \cdots x_mx_{m+2}\cdots x_{m+n}$,
shows that $C_{x_1x_2 \cdots x_m x_{m+2} \cdots x_{m+n}}((X-x_1)(X-x_2)\cdots(X-x_m)(X-x_{m+2})\cdots(X-x_{m+n})) = \sum_{r=1}^{m+n}(-1)^{r-1}(m+n-r)!\left(
                                     \begin{array}{c}
                                       m+n-1 \\
                                       r-1 \\
                                     \end{array}
                                   \right)$,
and $(X-x_1)(X-x_2)\cdots(X-x_m)(X-x_{m+1})(X-x_{m+3})\cdots(X-x_{m+n})$=
$X^{m+n-1}-X^{m+n-2}\sum_{i\neq(m+2)}x_i+\cdots+(-1)^{m+n-1}x_1x_2\cdots x_mx_{m+1}x_{m+3} \cdots x_{m+n}$ shows that $C_{x_1x_2 \cdots x_m x_{m+2} \cdots x_{m+n}}((X-x_1)(X-x_2)\cdots(X-x_m)(X-x_{m+1})(X-x_{m+3})\cdots(X-x_{m+n}))=
\sum_{r=1}^{m+n}(-1)^{r-1}(m+n-r)!\left(
                                     \begin{array}{c}
                                       m+n-2 \\
                                       r-1 \\
                                     \end{array}
                                   \right)$.

Also $C_{x_1x_2 \cdots x_m x_{m+2} \cdots x_{m+n}}((X-x_1)(X-x_2)\cdots(X-x_{m+2})(X-x_{m+4})\cdots(X-x_{m+n}))\\=C_{x_1x_2 \cdots x_m x_{m+2} \cdots x_{m+n}}((X-x_1)(X-x_2)\cdots(X-x_{m+3})(X-x_{m+5})\cdots(X-x_{m+n}))=\cdots = C_{x_1x_2 \cdots x_m x_{m+2} \cdots x_{m+n}}((X-x_1)(X-x_2)\cdots(X-x_m)\cdots(X-x_{m+n-1}))$\\=$\sum_{r=1}^{m+n}(-1)^{r-1}(m+n-r)!\left(
                                  \begin{array}{c}
                                       m+n-2 \\
                                       r-1 \\
                                     \end{array}
                                   \right)$.

Hence the permanent of the adjacency matrix of the stated graph is
$$n[\sum_{r=1}^{m+n}(-1)^{r-1}(m+n-r)!\{\left(
                                      \begin{array}{c}
                                        m+n-1 \\
                                        r-1 \\
                                      \end{array}
                                    \right)+(n-1)\left(
                                                   \begin{array}{c}
                                                     m+n-2 \\
                                                     r-1 \\
                                                   \end{array}
                                                 \right)\}].$$
\end{proof}

Now from the above lemma we get the permanent of adjacency matrix of strong power graph of any finite cyclic group.
Let $G$ be a cyclic group of order $n$. Then $G$ has $m=\phi(n)$ generators none of which is adjacent to the identity element $e$ of $G$. Thus the set $G \setminus {e}$ of all $n-1$ non-identity vertices forms a clique and identity $e$ is adjacent to each of the $n-(\phi(n)+1)$ non-identity non-generators. Hence from Lemma $3.1$ it follows immediately that:
\begin{theorem}
Let $G$ be a cyclic  group of order $n$, then the permanent of adjacency matrix of strong power graph of $G$ is  \\
$$(n-\phi(n)-1)[\sum_{r=1}^{n-1}(-1)^{r-1}(n-1-r)!\{\left(
                                      \begin{array}{c}
                                       n-2 \\
                                        r-1 \\
                                      \end{array}
                                    \right)+(n-2-\phi(n))\left(
                                                   \begin{array}{c}
                                                     n-3 \\
                                                     r-1 \\
                                                   \end{array}
                                                 \right)\}
].$$
\end{theorem}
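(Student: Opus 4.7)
The plan is to recognize that the strong power graph of a finite cyclic group fits exactly the structural description in Lemma 5.1, and then simply match the parameters. First I would recall from the earlier parts of the paper that if $G$ is cyclic of order $n$, then every non-identity element of $G \setminus \{e\}$ is pairwise adjacent in $\mathcal{P}_s(G)$ (they all share a common power, since each non-generator has order dividing $n$ and each generator has a nontrivial power that lands among the non-generators), so these $n-1$ vertices form a clique. The identity $e$ is not adjacent to any generator (a generator's only powers within the allowed range are again generators or non-identity non-generators, never $e$ itself used as a non-trivial power equalling a generator's power), but $e$ is adjacent to each of the $n-\phi(n)-1$ non-identity non-generators.

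Next, I would apply Lemma 5.1 by taking the ambient graph there to be $\mathcal{P}_s(G)$, with the clique of size $m+n$ identified as $G\setminus\{e\}$ and the "extra" vertex as $e$. The identifications are
\[
m + n + 1 = n_G, \qquad n_{\text{lem}} = n_G - \phi(n_G) - 1, \qquad m_{\text{lem}} = \phi(n_G),
\]
where I write $n_G$ for the group order to avoid collision with the $n$ in the lemma. In particular $m_{\text{lem}} + n_{\text{lem}} = n_G - 1$ and $n_{\text{lem}} - 1 = n_G - \phi(n_G) - 2$.

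Substituting these into the conclusion of Lemma 5.1 gives
\[
n_{\text{lem}}\sum_{r=1}^{m_{\text{lem}}+n_{\text{lem}}}(-1)^{r-1}(m_{\text{lem}}+n_{\text{lem}}-r)!\left\{\binom{m_{\text{lem}}+n_{\text{lem}}-1}{r-1}+(n_{\text{lem}}-1)\binom{m_{\text{lem}}+n_{\text{lem}}-2}{r-1}\right\},
\]
which, after the substitutions above and renaming $n_G$ back to $n$, is exactly the formula in the theorem statement.

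The main (and essentially only) obstacle is a bookkeeping one: confirming that the structural hypothesis of Lemma 5.1 genuinely applies, i.e.\ verifying the clique/adjacency description of $\mathcal{P}_s(\mathbb{Z}_n)$ cleanly, and being careful not to confuse the symbol $n$ used for the lemma's parameter with the order of the group. No new computation is needed; the result is a direct corollary of Lemma 5.1 once the identification $(m_{\text{lem}}, n_{\text{lem}}) = (\phi(n), n-\phi(n)-1)$ is made, and the fact that strong power graphs of isomorphic groups are isomorphic lets me reduce, without loss of generality, to the concrete case $G = \mathbb{Z}_n$.
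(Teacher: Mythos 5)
Your proposal is correct and is exactly the paper's argument: the authors also observe that $G\setminus\{e\}$ is a clique, that $e$ is adjacent precisely to the $n-\phi(n)-1$ non-identity non-generators, and then read the formula off from Lemma 5.1 with $(m_{\text{lem}},n_{\text{lem}})=(\phi(n),\,n-\phi(n)-1)$. No further comment is needed.
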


Now we compute the permanent of the Laplacian matrix of a graph and hence compute the permanent of laplacian matrix of strong power graph of any finite group.
\begin{lemma}
The permanent of the Laplcian matrix of a graph $\Gamma$ with $m+n+1$ vertices such that $m+n$ vertices form a clique and the rest vertex is edge connected with $n$ vertices is $\sum_{r=1}^{m+n}(m+n-r)!F_r(d)$, where $F_r(d)=\sum_{i+j=r-1}\left(
                                                                                                    \begin{array}{c}
                                                                                                      m \\
                                                                                                      i \\
                                                                                                    \end{array}
                                                                                                  \right)
(d+2)^{j}(d+1)^{i}[n\left(
                      \begin{array}{c}
                        n-1 \\
                        j \\
                      \end{array}
                    \right)
+n(n-1)\left(
         \begin{array}{c}
           n-2 \\
           j \\
         \end{array}
       \right)
+(-1)^{m+n-r+1}(d-m+1)(m+n-r+1)\left(
                                 \begin{array}{c}
                                   n \\
                                   j \\
                                 \end{array}
                               \right)
]$, and $d=m+n-1$.
\end{lemma}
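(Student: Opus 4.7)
The plan is to compute the permanent using the polynomial generating-function technique of Lemma 5.1. Label the $m+n$ clique vertices so that $1,\ldots,m$ are the ones not adjacent to the extra vertex and $m+1,\ldots,m+n$ are the ones adjacent, with the extra vertex indexed $m+n+1$. Write $X = x_1 + \cdots + x_{m+n}$, $Y = x_{m+1} + \cdots + x_{m+n}$, $y = x_{m+n+1}$. Then the row of $L$ corresponding to vertex $i \le m$ contributes the linear form $(d+1)x_i - X$; the row corresponding to vertex $m < i \le m+n$ contributes $(d+2)x_i - X - y$; and the last row contributes $ny - Y$. Hence
$$\mathrm{per}(L) \;=\; C_{x_1 x_2 \cdots x_{m+n+1}}\!\Big(\prod_{i=1}^{m}[(d+1)x_i - X]\cdot\!\!\prod_{i=m+1}^{m+n}\![(d+2)x_i - X - y]\cdot(ny - Y)\Big).$$

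Next I would expand the last factor. Choosing the summand $ny$ reduces the computation to extracting the coefficient of $x_1\cdots x_{m+n}$ in the product of the first $m+n$ linear forms with $y=0$; this is what will produce the $\binom{n}{j}$ term. Choosing the summand $-x_k$ for some $k\in\{m+1,\ldots,m+n\}$ forces me to pick up the linear $y$-coefficient in the middle product, which means dropping one of the $(d+2)$-type factors (index $i_0$). The sub-cases $k=i_0$ and $k\ne i_0$ then give, respectively, the $\binom{n-1}{j}$ term with multiplicity $n$ (one choice of $i_0=k$ for each of the $n$ values of $k$) and the $\binom{n-2}{j}$ term with multiplicity $n(n-1)$ (ordered pairs $(k,i_0)$ with $k\ne i_0$).

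The central algebraic computation in each case is the coefficient of $\prod_{i\in I} x_i$ in a product $\prod_{i\in I}(a_i x_i - X)$ with $a_i\in\{d+1,d+2\}$. Expanding each factor as either $a_i x_i$ or $-X$, and using the multinomial identity that the coefficient of $\prod_{i\in T}x_i$ in $X^{|T|}$ is $|T|!$, gives
$$C_{\prod_{i\in I}x_i}\Big(\prod_{i\in I}(a_i x_i - X)\Big) \;=\; \sum_{S \subseteq I}\Big(\prod_{i\in S} a_i\Big)(-1)^{|I|-|S|}(|I|-|S|)!.$$
Partitioning $S$ by $|S\cap [1,m]|=i$ and $|S\cap (I\cap(m,m+n])|=j$ then yields a double sum with weights $\binom{m}{i}(d+1)^i$ and $\binom{n'}{j}(d+2)^j$, where $n'\in\{n,n-1,n-2\}$ depending on how many $(d+2)$-indices are available in $I$.

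The hard part will be the bookkeeping that reassembles these three contributions into a single sum. I would re-index each by $r=i+j+1$ (so $i+j=r-1$), pull out the common prefactor $(m+n-r)!$ via $(m+n-r+1)! = (m+n-r+1)(m+n-r)!$, and substitute $d-m+1=n$ to match the coefficient of $\binom{n}{j}$. Signs line up through $(-1)^{m+n-r+1} = -(-1)^{m+n-r}$. The most delicate step is that the $\binom{n}{j}$-sum (Option A) naturally runs for $r=1,\ldots,m+n+1$ whereas the $\binom{n-1}{j}$ and $\binom{n-2}{j}$-sums run only up to $r=m+n$, so the boundary term at $r=m+n+1$ (corresponding to $i=m$, $j=n$) has to be carefully tracked and shown to combine with the rest so that everything packages into $\sum_{r=1}^{m+n}(m+n-r)!\,F_r(d)$.
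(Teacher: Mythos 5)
Your setup and route are essentially identical to the paper's: the same generating polynomial with rows $(d+1)x_i-X$, $(d+2)x_{m+i}-X-y$ and $ny-Y$, the same split on the last factor into the $ny$ summand versus the $-x_k$ summands, and the same subset-expansion giving the $\binom{m}{i}(d+1)^i(d+2)^j$ weights with $\binom{n}{j}$, $\binom{n-1}{j}$, $\binom{n-2}{j}$ according to how many $(d+2)$-factors survive. The boundary term at $r=m+n+1$ that you flag is a genuine issue: the paper's own proof leaves it as a separate summand $(d-m+1)\sum_{i+j=m+n}\binom{m}{i}\binom{n}{j}(d+2)^j(d+1)^i$ outside the sum (and its final display also carries signs $(-1)^{m+n-r}$ absent from the lemma statement), so your plan is, if anything, more honest about the reassembly step than the paper is.
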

\begin{proof}
 Consider
$F(x_1,x_2,\cdots,x_{m+n+1})=\prod_{i=1}^{m}(X+(d+1)x_i)\prod_{i=1}^{n}(X+(d+2)x_{m+i}-x_{m+n+1})((d-m+1)x_{m+n+1}+\sum_{i=1}^{n}-x_{m+i})$,
 where $X=-(x_1+x_2+\cdots +x_{m+n})$.
Then the permanent of the Laplacian matrix of $\Gamma$ is  $C_{x_1x_2 \cdots x_{m+n+1}}(F(x_1,x_2,\cdots,x_{m+n+1}))$  which is equal to
   $C_{x_1x_2\cdots x_{m+n+1}}(\prod_{i=1}^{m}(X+(d+1)x_i)\prod_{i=1}^{n}(X+(d+2)x_{m+i}-x_{m+n+1})(\sum_{i=1}^{n}-x_{m+i}))+ C_{x_1x_2\cdots x_{m+n}}((d-m+1)\prod_{i=1}^{m}X+(d+1)x_i)\prod_{i=1}^{n}(X+(d+2)x_{m+i}))$.
 Now  $(d-m+1)\prod_{i=1}^{m}(X+(d+1)x_i)\prod_{i=1}^{n}(X+(d+2)x_{m+i})=(d-m+1)[X^{m+n}+X^{m+n-1}\sum f_1(d)x_i+X^{m+n-2}\sum f_{12}(d)x_1x_2 + \cdots +f_{123\cdots m+n}(d)x_1x_2 \cdots x_{m+n}]$, where $f_{123 \cdots j}(d)$ is  a product of some $(d+1)$ and $(d+2)$ which is clear from the context.  So  $C_{x_1x_2\cdots x_{m+n}}  ((d-m+1)\prod_{i=1}^{m}(X+(d+1)x_i)\prod_{i=1}^{n}(X+(d+2)x_{m+i})) = (d-m+1)\sum_{r=1}^{m+n+1}(-1)^{m+n-r+1}(m+n-r+1)!f_r(d)$, where $f_r(d)=\sum_{i+j=r-1}\left(
                                                                                       \begin{array}{c}
                                                                                         m \\
                                                                                         i \\
                                                                                       \end{array}
                                                                                     \right)\left(
                                                                                              \begin{array}{c}
                                                                                                n \\
                                                                                                j \\
                                                                                              \end{array}
                                                                                            \right)(d+2)^{j}(d+1)^{i}.$
Now proceeding in the proof of Lemma $3.1$ we get $C_{x_1x_2\cdots x_{m+n+1}}  (\prod_{i=1}^{m}(X+(d+1)x_i)\prod_{i=1}^{n}(X+(d+2)x_{m+i}-x_{m+n+1})(\sum_{i=1}^{n}-x_{m+i}) =n\sum_{r=1}^{m+n}(-1)^{m+n-r}(m+n-r)![\sum_{i+j=r-1}\left(
                                                       \begin{array}{c}
                                                         m \\
                                                         i \\
                                                       \end{array}
                                                     \right)(\left(
                                                               \begin{array}{c}
                                                                 n-1 \\
                                                                 j \\
                                                               \end{array}
                                                             \right)+(n-1)\left(
                                                                            \begin{array}{c}
                                                                              n-2 \\
                                                                              j \\
                                                                            \end{array}
                                                                          \right)
                                                     )(d+2)^{j}(d+1)^{i}]$.

Hence the  permanent of the Laplacian matrix of $\Gamma$ is
$$\sum_{r=1}^{m+n}\{(-1)^{m+n-r}(m+n-r)!F_r(d)\}+(d-m+1)\sum_{i+j=m+n}\left(
                                                                                                       \begin{array}{c}
                                                                                                         m \\
                                                                                                         i \\
                                                                                                       \end{array}
                                                                                                     \right)\left(
                                                                                                              \begin{array}{c}
                                                                                                                n \\
                                                                                                                j \\
                                                                                                              \end{array}
                                                                                                            \right)(d+2)^{j}(d+1)^{i},$$
where $F_r(d)=\sum_{i+j=r-1}\left(
                                  \begin{array}{c}
                                    m \\
                                    i \\
                                  \end{array}
                                \right)(d+2)^{j}(d+1)^{i}\{n\left(
                                                             \begin{array}{c}
                                                               n-1 \\
                                                               j \\
                                                             \end{array}
                                                           \right)+n(n-1)\left(
                                                                           \begin{array}{c}
                                                                             n-2 \\
                                                                             j \\
                                                                           \end{array}
                                                                         \right)-(d-m+1)(m+n-r+1)\left(
                                                                                                   \begin{array}{c}
                                                                                                     n \\
                                                                                                     j \\
                                                                                                   \end{array}
                                                                                                 \right)\}$.

\end{proof}

The strong power graph of a cyclic group of order $n$ is a graph of same type as in Lemma $3.3$. So we have:

\begin{theorem}
Let $G$ be a cyclic group of order $n$, then the permanent of Laplacian matrix of strong power graph of $G$ is \\ $\sum_{r=1}^{n-1}\{(-1)^{n-r-1})(n-r-1)!F_r(d)\}+(n-\phi(n)-1)\sum_{i+j=n-1}\left(
                                                                              \begin{array}{c}
                                                                                \phi(n) \\
                                                                                i \\
                                                                              \end{array}
                                                                            \right)\left(
                                                                                     \begin{array}{c}
                                                                                       n-\phi(n)-1 \\
                                                                                       j \\
                                                                                     \end{array}
                                                                                   \right)n^{j}(n-1)^{i}$,
where $F_r(d)=\sum_{i+j=r-1}\left(
                              \begin{array}{c}
                                \phi(n) \\
                                i \\
                              \end{array}
                            \right)n^{j}(n-1)^{i}\{(n-\phi(n)-1)\left(
                                                                  \begin{array}{c}
                                                                    n-\phi(n)-2\\
                                                                    j \\
                                                                  \end{array}
                                                                \right)+(n-\phi(n)-1)(n-\phi(n)-2)\left(
                                                                                                    \begin{array}{c}
                                                                                                      n-\phi(n)-3 \\
                                                                                                      j \\
                                                                                                    \end{array}
                                                                                                  \right)
-(n-\phi(n)-1)(n-r)\left(
                     \begin{array}{c}
                       n-\phi(n)-1 \\
                       j \\
                     \end{array}
                   \right)
                            \}$.
\end{theorem}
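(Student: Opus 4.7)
The plan is to recognize the strong power graph $\mathcal{P}_s(\mathbb{Z}_n)$ of a cyclic group as an instance of the graph class treated in Lemma 3.3, and then specialize the Laplacian permanent formula proved there. Since cyclic groups of the same order are isomorphic and isomorphic groups produce isomorphic strong power graphs with equal Laplacian permanents, it suffices to work with $\mathbb{Z}_n$.

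First I would analyze the structure of $\mathcal{P}_s(\mathbb{Z}_n)$. The group has $\phi(n)$ generators and $n-\phi(n)-1$ non-identity non-generators, together with the identity $e$. For any two non-identity elements $a,b$, there exist positive integers $m_1,m_2<n$ with $a^{m_1}=b^{m_2}$: one may send both sides to $e$ via their respective orders when both orders are less than $n$, or otherwise exploit the fact that a generator's nontrivial powers realize every non-identity element. Thus the $n-1$ non-identity elements induce a clique. The identity is adjacent to each non-identity non-generator (any such $b$ satisfies $b^{\mathrm{ord}(b)}=e=e^{1}$ with $\mathrm{ord}(b)<n$) but to no generator (since $a^{m_2}=e$ forces $m_2\ge n$ when $a$ has order $n$). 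Hence $\mathcal{P}_s(\mathbb{Z}_n)$ is exactly a graph of the form considered in Lemma 3.3, with the clique having $n-1$ vertices and the extra vertex adjacent to $n-\phi(n)-1$ of them.

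Next I would set up the parameter dictionary between the lemma and the present theorem. In the notation of Lemma 3.3, take the lemma's $m$ to be $\phi(n)$ and the lemma's $n$ to be $n-\phi(n)-1$; then the clique size $m+n=n-1$ and the total vertex count $m+n+1=n$ match the group. Consequently $d=m+n-1$ specializes to $n-2$, so $d+1=n-1$, $d+2=n$, and $d-m+1=n-\phi(n)-1$, while $m+n-r+1$ becomes $n-r$. Substituting these into the expression of Lemma 3.3 converts $\binom{m}{i}(d+2)^{j}(d+1)^{i}$ into $\binom{\phi(n)}{i}n^{j}(n-1)^{i}$, each of the three terms inside the brace of $F_r(d)$ into its displayed counterpart involving $\binom{n-\phi(n)-2}{j}$, $\binom{n-\phi(n)-3}{j}$, and $\binom{n-\phi(n)-1}{j}$, and the outer prefactor $(-1)^{m+n-r}(m+n-r)!$ into $(-1)^{n-r-1}(n-r-1)!$. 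The boundary summand $(d-m+1)\sum_{i+j=m+n}\binom{m}{i}\binom{n}{j}(d+2)^{j}(d+1)^{i}$ becomes the additive term $(n-\phi(n)-1)\sum_{i+j=n-1}\binom{\phi(n)}{i}\binom{n-\phi(n)-1}{j}n^{j}(n-1)^{i}$ appearing in the theorem.

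The main obstacle is entirely bookkeeping. One must carefully track signs, particularly verifying that the factor $(-1)^{m+n-r+1}$ inside the lemma's $F_r$ absorbs into the minus sign written before the last term of the theorem's $F_r$, and ensure that the upper limit of the outer sum shifts consistently from $m+n$ to $n-1$. No new combinatorial content is introduced; once the parameter translation above is in hand, the theorem is a direct corollary of Lemma 3.3.
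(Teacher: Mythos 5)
Your proposal is correct and follows essentially the same route as the paper: the authors likewise observe that $\mathcal{P}_s(\mathbb{Z}_n)$ consists of a clique on the $n-1$ non-identity elements with the identity adjacent exactly to the $n-\phi(n)-1$ non-identity non-generators, and then obtain the theorem by substituting $m=\phi(n)$, lemma-$n = n-\phi(n)-1$, $d=n-2$ into Lemma 5.3. Your explicit verification of the adjacency structure and of the sign/parameter bookkeeping (including the $(-1)^{m+n-r+1}$ versus plain minus discrepancy between the lemma's statement and its proof) is more careful than the paper's one-line justification, but the argument is the same.
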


 For any noncyclic group $G$ of order $n$ the strong power graph $\mathcal{P}_s(G)$ is complete, so we have:
\begin{theorem}
The permanent of the Laplacian matrix of the strong power graph $\mathcal{P}_s(G)$ of a noncyclic group $G$ of order $n$ is
$(-1)^{n}n!(1-\frac{n}{1!}+\frac{n^{2}}{2!}-\frac{n^{3}}{3!}+\cdots + (-1)^{n}\frac{n^{n}}{n!})$.
\end{theorem}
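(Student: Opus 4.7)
The plan is to use the fact, noted earlier in the paper, that $\mathcal{P}_s(G) = K_n$ for every noncyclic group $G$ of order $n$. Consequently $L(\mathcal{P}_s(G)) = nI_n - J_n$, i.e.\ its diagonal entries are $n-1$ and all off-diagonal entries are $-1$. This single, highly symmetric matrix makes the permanent computation purely combinatorial; the algebraic structure of $G$ itself plays no further role.

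For the computation I would apply the polynomial observation recorded at the start of Section 5. Since each Laplacian row sums to zero, the $i$-th linear form simplifies from $(n-1)x_i - \sum_{j \ne i} x_j$ to $nx_i - X$, where $X = x_1 + x_2 + \cdots + x_n$. Hence
$$\mathrm{per}(L(\mathcal{P}_s(G))) = C_{x_1 x_2 \cdots x_n}\!\left(\prod_{i=1}^{n}(nx_i - X)\right).$$
The key step is to expand this product as a sum over subsets $S \subseteq \{1, \dots, n\}$, taking $nx_i$ from the factors with $i \in S$ and $-X$ from the factors with $i \notin S$:
$$\prod_{i=1}^{n}(nx_i - X) = \sum_{k=0}^{n} (-1)^{n-k}\, n^{k}\, X^{n-k} \sum_{|S|=k}\,\prod_{i \in S} x_i.$$
For a fixed $S$ of size $k$, the target coefficient of $x_1 x_2 \cdots x_n$ demands the coefficient of $\prod_{i \notin S} x_i$ in $X^{n-k}$, and by the multinomial theorem this is simply $(n-k)!$.

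Summing over the $\binom{n}{k}$ size-$k$ subsets and then over $k$ yields
$$\mathrm{per}(L(\mathcal{P}_s(G))) = \sum_{k=0}^{n} \binom{n}{k}\, n^{k}\, (-1)^{n-k}\, (n-k)! = (-1)^{n} n! \sum_{k=0}^{n} \frac{(-1)^{k} n^{k}}{k!},$$
which is exactly the stated expression. There is no real obstacle here: once the polynomial formulation and the row-sum simplification $nx_i - X$ are in hand, the whole proof is a short coefficient extraction. The only point to watch is that $X^{n-k}$ contributes $(n-k)!$ (not $1$) as the coefficient of any fixed product of $n-k$ distinct variables, a rearrangement factor that is precisely what produces the derangement-like shape $n!\sum (-1)^k n^k/k!$ characteristic of the permanent of $nI - J$.
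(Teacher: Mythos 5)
Your argument is correct: writing $L(\mathcal{P}_s(G))=nI-J$, passing to the product of linear forms $\prod_i(nx_i-X)$, and extracting the coefficient of $x_1\cdots x_n$ (with the multinomial factor $(n-k)!$ from $X^{n-k}$) gives exactly $\sum_k\binom{n}{k}n^k(-1)^{n-k}(n-k)!=(-1)^n n!\sum_k(-1)^k n^k/k!$, which one can confirm against small cases such as $n=4$. This is precisely the coefficient-extraction technique the paper sets up at the start of Section 5; the paper itself states this theorem without any proof, so your write-up in fact supplies the missing argument in the paper's own style.
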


\bibliographystyle{amsplain}

\end{document}